\documentclass{aims}
\usepackage{amsfonts}
\usepackage{geometry}
\usepackage{amsmath}
\usepackage{amssymb}
\usepackage{graphicx}
\usepackage{txfonts}

\setcounter{page}{148}

\captionsetup[figure]{font=normalsize,labelfont=bf,singlelinecheck=true}
\captionsetup[table]{font=normalsize,labelfont=bf,singlelinecheck=true}

\newtheorem{theorem}{Theorem}[section]
\newtheorem{lemma}[theorem]{Lemma}
\newtheorem{definition}[theorem]{Definition}
\newtheorem{example}[theorem]{Example}

\newtheorem{remark}[theorem]{Remark}
\numberwithin{equation}{section}

\begin{document}
\newtheorem{pf}{Proof}
\newtheorem{pot}{Proof of Theorem}
\title{A minimization approach to conservation laws with random initial conditions
and non-smooth, non-strictly convex flux}
\author{Carey Caginalp\affil{}\corrauth}
\shortauthors{the Author(s)}
\address{University of Pittsburgh, Mathematics Department, 301 Thackeray Hall,
Pittsburgh PA\ 15260}
\corraddr{Email: carey\_caginalp@alumni.brown.edu.}

\begin{abstract}
We obtain solutions to conservation laws under any random initial conditions
that are described by Gaussian stochastic processes (in some cases discretized). We analyze the
generalization of Burgers' equation for a smooth flux function $H\left(  p\right)
=\left\vert p\right\vert ^{j}$ for $j\geq2$ under random
initial data. We then consider a piecewise linear, non-smooth and non-convex
flux function paired with general discretized Gaussian stochastic process initial data. By
partitioning the real line into a finite number of points, we obtain an exact
expression for the solution of this problem. From this we can also find exact
and approximate formulae for the density of shocks in the solution profile at
a given time $t$ and spatial coordinate $x$. We discuss the simplification of
these results in specific cases, including Brownian motion and Brownian
bridge, for which the inverse covariance matrix and corresponding eigenvalue
spectrum have some special properties. We calculate the transition 
probabilities between various cases and examine the variance of the 
solution $w\left(x,t\right)$ in both $x$ and $t$. We also describe how results may be obtained for a non-discretized version of a Gaussian stochastic process by
taking the continuum limit as the partition becomes more fine.

\end{abstract}
\keywords{conservation laws; random initial conditions; Lax-Oleinik; shocks; variational problems in differential equations; minimization; stochastic processes
\newline
\textbf{Mathematics Subject Classification:} 35F20, 35F31}
\maketitle

\section{Introduction}

The conservation law in the form%
\begin{align}
w_{t}+\left(  \mathcal{H}\left(  w\right)  \right)  _{x} &  =0\text{ in
}\mathbb{R\times}\text{ }\left(  0,\infty\right)  \nonumber\\
w\left(  x,0\right)   &  =g^{\prime}\left(  x\right)  \text{ on }%
\mathbb{R}\times\left\{  t=0\right\}  \label{cl}%
\end{align}
and its related Hamilton-Jacobi problem%
\begin{align}
u_{t}+\mathcal{H}\left(  u_{x}\right)   &  =0\text{ in }\mathbb{R\times}\text{
}\left(  0,\infty\right)  \nonumber\\
u\left(  x,0\right)   &  =g\left(  x\right)  \text{ on }\mathbb{R}%
\times\left\{  t=0\right\}  \label{hj}%
\end{align}
have a wide array of applications among fluid mechanics, shocks, and
turbulence. An interesting feature is that even for a smooth flux function
$\mathcal{H}$ and smooth initial data, discontinuous solutions to the
conservation law (\ref{cl}) may occur \cite{BG, CD, ERS, EV, FM, GR, HA, HP,
KR, KA, LA1, LA2, M, MP, MS, SL1, SH}. In particular, this is observed in the
prototypical case of Burgers' equation, for which $\mathcal{H}\left(
w\right)  =w^{2}/2$. The work of Dafermos \cite{D1} was instrumental in
constructing an approximation for this case and more generally, locally
Lipschitz $\mathcal{H}$ by using a series of piecewise linear flux functions
to construct the solution for deterministic initial conditions. Using our
methods for random initial conditions, one also can construct the continuum
limit for the deterministic problem as a special case.

In this paper, we consider the introduction of randomness into the initial
conditions for the conservation law above. The classical approach for solving
the conservation law (\ref{cl}) entails using variational methods to derive
the Lax-Oleinik formula \cite{EV, HP, LA1, LA2}%
\begin{equation}
w\left(  x,t\right)  :=\frac{\partial}{\partial x}\left[  \min_{y\in
\mathbb{R}}\left\{  t\mathcal{L}\left(  \frac{x-y}{t}\right)  +g\left(
y\right)  \right\}  \right]  , \label{clsol}%
\end{equation}
requiring assumptions on smoothness, superlinearity, and strict convexity of
the flux function. In a previous paper \cite{CA}, we showed that (\ref{clsol})
will still hold when these conditions are relaxed, with the only other
assumption being that of Lipschitz continuity on the initial condition $g^{\prime
}\left(  x\right)  $. These methods thus enable us to derive a number of
results for the case of non-deterministic initial data. When dealing with
randomness, an important notion is that of a stochastic process $X\left(
t\right)  $. The process is a set of random variables linked together by a law
of probability with the set of possible paths given by some state space
$\Omega$. In general this is far too broad a class to attack analytically, so
attempts to derive exact solutions are often restricted to one specific
stochastic process, for example Brownian motion \cite{FM}. In this work, we
derive results for an entire family of stochastic processes. In Section 2 we
provide an introduction and references to some of the key properties of such a
family known as \textit{Gaussian stochastic processes}. The motivation for
this class of process is not only that it has sufficient structural properties
to obtain closed-form results but also has the breadth to analyze a number of
distinct cases. We also utilize this approach in calculating transition 
probabilities between different states and examining the variance in 
space as time increases.

Consider the basic problem $w_{t} + |w|_{x} = 0$ subject to continuous initial
conditions $w(x,0)=g'(x)$. Application of our results gives 
$g(x-t)$ as the solution if the minimum of $g(y)$ is at the left endpoint of the interval, 
$g(x+t)$ if the minimum is on the right endpoint, and $0$ for an 
interior minimum. Thus for $g'$ that is given by Brownian motion, 
one minimizes integrated Brownian motion over a finite interval and
obtains the shock statistics.

This example can provide a pathway to considering the behavior 
of solutions to more complicated flux functions under random initial data, 
for example when one has not one but a number of affine
segments linked together. This sort of construction is denoted as a 
\emph{polygonal flux function} and is thus particularly suited to analysis
under the Hopf-Lax minimization approach \cite{HP, LA1,
LA2} discussed below. Together with the results proven by Dafermos \cite{D1},
this methodology can be the basis for a general class of flux and random
initial data.

Notably, both Brownian motion and Brownian bridge initial conditions for
$g^{\prime}\left(  x\right)  $ will satisfy the requisite regularity
properties, as well the additional property of being Gaussian stochastic
processes. Furthermore, when integrated to obtain integrated Brownian motion
and integrated Brownian bridge, they remain Gaussian stochastic process,
leading to useful properties on both $g$ and $g^{\prime}$ that aid in the
analysis and computation of minimizers as specified in (\ref{clsol}). As is
shown in \cite{CA}, we have the key relationship for the solution $w$ given by%
\begin{equation}
w\left(  x,t\right)  =g^{\prime}\left(  y^{\ast}\left(  x,t\right)  \right)
\label{introkr}%
\end{equation}
(when $g'\left(y^{\ast}\left(x,t\right)\right)$ exists) 
expressed as a function of the greatest minimizer,%
\begin{equation}
y^{\ast}=\arg^{+}\min_{y\in\mathbb{R}}\left\{  t\mathcal{L}\left(  \frac
{x-y}{t}\right)  +g\left(  y\right)  \right\}  . \label{ystardef}%
\end{equation}
For the case of Burgers' equation, one uses the relation $g^{\prime}\left(
y^{\ast}\left(  x,t\right)  \right)  =\frac{x-y^{\ast}\left(  x,t\right)  }%
{t}$ to obtain a special case of (\ref{introkr}) given by%
\begin{equation}
w\left(  x,t\right)  =\frac{x-y^{\ast}\left(  x,t\right)  }{t}
\label{introkrb}%
\end{equation}
When one introduces Brownian motion initial data, (\ref{introkr}) leads to the
following result for the variance of the solution expressed in terms of the
variance of the minimizer in (\ref{ystardef}), for a flux function given by
$\mathcal{H}\left(  p\right)  =\frac{1}{j}\left\vert p\right\vert ^{j}$%
\begin{equation}
Var\left(  w\left(  x,t\right)  \right)  =t^{-\frac{2}{j-1}}Var\left(
sgn\left(  x-y^{\ast}\left(  x,t\right)  \right)  \left\vert x-y^{\ast}\left(
x,t\right)  \right\vert ^{\frac{1}{j-1}}\right)
\end{equation}
which for Burgers' equation reduces to
\begin{equation}
Var\left(  w\left(  x,t\right)  \right)  =\frac{1}{t^{2}}Var\left(  x-y^{\ast
}\left(  x,t\right)  \right)  . \label{varburg}
\end{equation}
Later on, we specialize to the case $x=0$.

In Section 3 we discuss extensions of (\ref{varburg}) to more general flux
functions. For example, with the flux $\mathcal{H}\left(  p\right)  =p^{2n}$,
we have a generalization of (\ref{varburg}), providing an understanding of the
propagation of variance over the sample space $\Omega$. We also consider a
related case of $\mathcal{H}\left(  p\right)  =\left\vert p\right\vert $, in
light of its Legendre transform%
\begin{equation}
\mathcal{L}\left(  q\right)  =\left\{
\begin{array}
[c]{c}%
0\\
+\infty
\end{array}
\right.
\begin{array}
[c]{c}%
\left\vert q\right\vert \leq1\\
\left\vert q\right\vert >1
\end{array}
\end{equation}
for which the solution has a very simple probabilistic interpretation. In
particular, one can show formally that the frequency of jumps in $w\left(  x,t\right)
$ decreases in time.

The minimum defined on the right-hand side of (\ref{ystardef}) is of course
unique under the classical assumptions that $\mathcal{H}$ is smooth,
superlinear, and uniformly convex \cite{EV}. However, the analog of the result
(\ref{introkr}) remarkably continues to hold even when these assumptions are
relaxed, i.e. when the flux function has a lesser degree of regularity, the
convexity of $\mathcal{H}$ is not uniform or even strict, and when it is no
longer superlinear. Even if the initial condition $g^{\prime}$ is also
discontinuous, one can prove a modified form of (\ref{introkr}) by accounting
for the vertices of $g$.

A particularly interesting and yet broad range of cases results from studying
a Gaussian stochastic process, $X\left(  r\right)  $, a random process
completely determined by its second order statistics. In other words, such a
process at arbitrary times $\left\{  r_{i}\right\}  _{i=1}^{n}$ is completely
determined by the means $\mathbb{E}\left\{  X\left(  r_{i}\right)  \right\}  $
and the corresponding covariance matrix $\sum$ defined by $\left(
\sum\right)  _{ij}=Cov\left(  X\left(  r_{i}\right)  ,X\left(  r_{j}\right)
\right)  $. We denote the inverse covariance matrix by $A:=\sum^{-1}$.

In Section 4, we consider a Gaussian stochastic process $X\left(  r\right)  $
and partition the real line into sets $\zeta_{i}$ with measure $\nu\left(
\zeta_{i}\right)  >\varepsilon>0$, and take a finite number of points, among
which we compute the minimum. We pair this initial condition with a piecewise
linear flux function. The Legendre transform $L$ of this flux function only
takes a finite number of slopes, each on an interval of finite measure. By
taking the intersection of our partition with this interval, we obtain a
finite covering for it, and thus a finite number of points at which to analyze
the quantity minimized in (\ref{ystardef}). We want to calculate the solution
$\mathbb{E}\left\{  w\left(  x,t\right)  \right\}  $, which we have shown in
\cite{CA} to depend only on the derivative of $L$ or $g$ at the point of the
minimizer. By considering first the local minima of this quantity for each of
the finite number of segments of $L$, and then taking the minimum over those
segments, we obtain a remarkable closed-form expression for the solution. 
Among the results we can obtain from this is the expectation of the solution 
at a point $\left(x,t\right)$.
\begin{align}
\mathbb{E}\left\{  w\left(  x,t\right)  \right\}
=-\sum_{i=1}^{N}p_{i}c_{N+1-i}
+ \sum_{i=N+1}^{2N+1}p_{i}\mathbb{E}\left\{g'(y^{*}(x,t))\right\}\label{Thm5}%
\end{align}
where%
\begin{align}
p_{i} &  = \sum_{j=2}^{n_{i}}
\left(  2\pi\right)  ^{-\frac{n}{2}}\left\vert A\right\vert
^{\frac{1}{2}}\int_{-\infty}^{\infty}dx_{i,j}
{\displaystyle\prod\limits_{\left(m,l\right)\neq \left(i,j\right)}}\int_{x_{i,j}}^{\infty} dx_{m,l}
e^{-\frac{1}{2}\left(
\bar{x}-\tilde{\mu}\right)  ^{T}A\left(  \bar{x}-\tilde{\mu}\right)}\nonumber\\
&for \ 1\le i \le N.\nonumber\\
&\ For \ N+1\leq i\le2N+1 \ one \ has \ p_{i}=\mathbb{P}\left\{R_{i-N}\right\}.
\label{introSolution}
\end{align}
are expressed in terms of integrals of nested error functions, and the sets 
$\left\{R_{i}\right\}$ refer to the event that a minimum occurs at a vertex
of $L$ (discussed further in (\ref{Thm4})). The quantity
$A_{i}$ in the first term of (\ref{introSolution}) here denotes the inverse
covariance matrix corresponding to the $i$th affine segment of $L$, and $A$
for the same quantity over the whole domain. The vector $\mu$ is given by the
discrete points of $\mu=\mathbb{E}\left\{  g\left(  y\right)  +tL\left(
\frac{x-y}{t}\right)  \right\}  $, where the second term is deterministic. The
quantity $\tilde{\mu}$ refers to the analogous quantity in the $i$th segment.
The integers $n_{i}$ correspond to technical details of partitioning the
intervals for which $L$ has different slopes, as described in Section 5.

An important aspect of this evolution is that for Burgers' equation with random
initial conditions, one has an increasing variance in the probabilistic sense
for the solution $w\left(  x,t\right)  $ as a function of $t$ for a given $x$,
but formal calculations suggest that it has decreasing variance in the sense
of $TV\left(  w\left(  x,t\right)  \right)  $ as a function of time. In this
manner, it is almost paradoxical that conservation laws which lead to shocks
from smooth solutions can also act in such a way as to smooth out random
initial conditions.

A key issue in kinetic theory is the distribution and formation of
discontinuities in the solution, known as \textit{shocks}. In particular, for
a Gaussian stochastic process paired with a piecewise linear flux function, we
can expand on the results of Section 5, including (\ref{introSolution}). In
doing so, we derive expressions (both exact and approximate) for the
distribution of these shocks based on the minimizer switching from a point on
one segment of $L$ to another. From this result, we also compute the total
density of shocks.

The expressions obtained in Sections 5 and 6 hold for \textit{any} Gaussian
stochastic process. Some cases of particular interest include that of Brownian
motion or Brownian bridge initial conditions, as well as Ornstein-Uhlenbeck
(\cite{PK}, p. 439). In Section 6, we apply our results to special cases and
observe the simplifications for these specific processes. Since many of our
results are exact and in terms of error functions, this also provides a
pathway to further numerical computation. By taking the limit of a finer
partition of the real line, we also explain how one may be able to pass to a
limit and obtain results for the continuum problem in addition to the
discretized one.

\section{Background on Gaussian stochastic processes}

\subsection{Stochastic processes}

In considering randomness for the initial conditions along the entire real
line $\mathbb{R}$, it is important to have a well-defined construction. A
\textit{stochastic process} $X\left(  s\right)  $ is a collection of random
variables on a probability space $\left(  \Omega,\mathcal{F},\mathbb{P}%
\right)  $. The space $\Omega$ can be thought of as the set of all possible
outcomes for the random variable, $\mathcal{F}$ as a set of reasonable sets of
outcomes (i.e. \textit{measurable} sets) and $\mathbb{P}$ is a measure with
total mass $1$ assigning the probabilities for which an outcome or set of
outcomes in $\mathcal{F}$ can occur. The expectation $\mathbb{E}\left\{
X\right\}  $ of a random variable $X$ is defined as $\mathbb{E}\left\{
X\right\}  =\int_{\Omega}X\left(  \omega\right)  dP\left(  \omega\right)  $.
We also require that the measure $\mathbb{P}$ be countably additive, i.e.
$\mathbb{P}\left(  \cup_{i\in\mathbb{N}}F_{i}\right)  =\sum_{i=1}^{\infty
}\mathbb{P}\left(  F_{i}\right)  $ for $\left\{  F_{i}\right\}  $ disjoint.

\begin{example}
\label{Ex 2.2}Consider a collection of random variables $\left\{  X_{\alpha
}\right\}  _{\alpha\in I}$ for some index set $I$ such that $X_{\alpha}%
\sim\mathcal{N}\left(  0,1\right)  $. That is, take identically distributed
random variables that are normally distributed with mean $0$ and variance $1$.
Define%
\begin{equation}
S\left(  n\right)  =\sum_{i=1}^{n}X_{\alpha}.
\end{equation}
The process $S\left(  n\right)  $ is known as a random walk and is a
stochastic process on the probability space $\left(  \Omega,\mathcal{F}%
,\mathbb{P}\right)  $ where $\Omega=\mathbb{N}$, $\mathcal{F}$ is the set of
Borel sets on $\mathbb{R}$, and $\mathbb{P}$ is the usual probability measure.
\end{example}

\subsection{Gaussian stochastic processes}

Let $\left\{  R_{i}\right\}  _{i=1}^{m}$ be a collection of independent,
identically distributed random variables with standard normal distribution,
i.e. $R_{i}\sim\mathcal{N}\left(  0,1\right)  $ as in Example \ref{Ex 2.2}. If
a set of random variables $\left\{  X_{i}\right\}  _{i=1}^{n}$ satisfy%
\begin{align}
X_{1} &  =a_{11}R_{1}+...+a_{1m}R_{m}+\mu_{1}\nonumber\\
&  ...\nonumber\\
X_{n} &  =a_{n1}R_{1}+...+a_{nm}R_{m}+\mu_{n}%
\end{align}
for constants $\left\{  a_{ij}\right\}  ,$ $\left\{  \mu_{i}\right\}  $, we
say that $\left(  X_{1},...,X_{n}\right)  $ form a \textit{multivariate normal
distribution}. We also define the \textit{covariance } of two random variables below.

\begin{definition}
Let $X_{i}$ and $X_{j}$ be random variables. We define the \textit{covariance
}of the random variables by%
\begin{align}
Cov\left[  X_{i},X_{j}\right]   &  =\mathbb{E}\left[  X_{i}-\mathbb{E}\left[
X_{i}\right]  \right]  \mathbb{E}\left[  X_{j}-\mathbb{E}\left[  X_{j}\right]
\right] \nonumber\\
&  =\mathbb{E}\left[  X_{i}X_{j}\right]  -\mathbb{E}\left[  X_{i}\right]
\mathbb{E}\left[  X_{j}\right]
\end{align}

\end{definition}

This is the basis of the definition for a Gaussian stochastic process. In
particular, one has the following \cite{RO}:

\begin{theorem}
If $\left(  X_{1},...,X_{n}\right)  $ have a multivariate normal distribution with 
density given by
\begin{equation}
f_{X}\left(X_{1},...,X_{n}\right)=\frac{e^{-\frac{1}{2}\left(\overrightarrow{x}-\overrightarrow{\mu}\right)^{T}
\Sigma^{-1}\left(\overrightarrow{x}-\overrightarrow{\mu}\right)}}
{\sqrt{\left(2\pi\right)^{n}\left\vert\Sigma\right\vert}}
\end{equation}
where $\overrightarrow{\mu}$ is the mean, and $\Sigma$ the covariance 
matrix, then the joint moment generating function is given by%
\begin{equation}
\phi\left(  \lambda_{1},...,\lambda_{n}\right)  =\exp\left\{  \sum_{i=1}%
^{n}\lambda_{i}\mu_{i}+\frac{1}{2}\sum_{i=1}^{n}\sum_{j=1}^{n}\lambda
_{i}\lambda_{j}Cov\left[  X_{i}X_{j}\right]  \right\}
\end{equation}
\end{theorem}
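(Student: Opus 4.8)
The plan is to compute the moment generating function directly from the definition
$\phi(\lambda_1,\dots,\lambda_n) = \mathbb{E}\left\{\exp\left(\sum_{i=1}^n \lambda_i X_i\right)\right\} = \int_{\mathbb{R}^n} e^{\lambda^T x}\, f_X(x)\,dx$,
and to show this integral evaluates to the claimed exponential. First I would write $\lambda^T x - \tfrac12 (x-\mu)^T \Sigma^{-1}(x-\mu)$ and complete the square in $x$: setting $y = x - \mu - \Sigma\lambda$, a short algebraic manipulation gives
$\lambda^T x - \tfrac12(x-\mu)^T\Sigma^{-1}(x-\mu) = \lambda^T\mu + \tfrac12 \lambda^T\Sigma\lambda - \tfrac12 y^T\Sigma^{-1} y$.
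The remaining Gaussian integral $\int_{\mathbb{R}^n} e^{-\frac12 y^T\Sigma^{-1}y}\,dy$ is exactly $\sqrt{(2\pi)^n|\Sigma|}$, which cancels the normalizing constant in $f_X$, leaving $\exp\{\lambda^T\mu + \tfrac12\lambda^T\Sigma\lambda\}$. Since $(\Sigma)_{ij} = \mathrm{Cov}[X_i,X_j]$ and $\mu_i = \mathbb{E}\{X_i\}$, this is precisely the stated formula $\exp\left\{\sum_i \lambda_i\mu_i + \tfrac12\sum_i\sum_j \lambda_i\lambda_j \mathrm{Cov}[X_i X_j]\right\}$.

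A few routine points need care along the way. One must justify that the integral converges and that interchanging expectation with the sum inside the exponent is legitimate — this follows because the integrand is a Gaussian times an exponential-linear factor, hence absolutely integrable for every real $\lambda$, so dominated convergence (or a direct bound) applies. The change of variables $y = x-\mu-\Sigma\lambda$ has Jacobian $1$, so no extra factor appears. Finally, one should note $\Sigma$ is symmetric positive definite (as a covariance matrix with a genuine density), so $\Sigma^{-1}$ exists, $|\Sigma| > 0$, the square root is well defined, and the quadratic form $y^T\Sigma^{-1}y$ is genuinely positive definite, ensuring the Gaussian integral formula applies.

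The main obstacle — though it is more bookkeeping than genuine difficulty — is the completion-of-the-square identity, since it must be carried out at the level of matrices rather than scalars and uses the symmetry of $\Sigma^{-1}$ to combine the cross terms $\lambda^T(x-\mu)$ and $(x-\mu)^T\Sigma^{-1}(\Sigma\lambda)$. Verifying that the cross terms from expanding $(x-\mu-\Sigma\lambda)^T\Sigma^{-1}(x-\mu-\Sigma\lambda)$ produce exactly $-2\lambda^T(x-\mu)$ and that the pure-$\lambda$ term is $\lambda^T\Sigma\lambda$ is the one computation that deserves to be written out explicitly; everything else is standard. An alternative, if one wishes to avoid even this, is to first establish the result for the standard case $\mathcal{N}(0,I_n)$ by factoring into $n$ one-dimensional Gaussian integrals, then use the representation $X = \mu + AR$ with $AA^T = \Sigma$ from the definition of multivariate normal given above, and transform; but the direct integral computation is cleaner and self-contained.
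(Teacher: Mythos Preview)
Your proposal is correct and matches what the paper does: the paper does not write out a proof at all, stating only that ``This is easily proven using the definition and algebraic manipulation and properties of characteristic functions.'' Your direct computation via completing the square is exactly the standard algebraic manipulation alluded to, and the alternative route you mention (reduce to $\mathcal{N}(0,I_n)$ and transform via $X=\mu+AR$) is the characteristic-function/linear-transformation argument the paper also gestures at.
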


This is easily proven using the definition and algebraic manipulation and
properties of characteristic functions. We can now define what we mean by a
Gaussian stochastic process:

\begin{definition}
\label{Def 2.1}Let $\left\{  t_{i}\right\}  _{i=1}^{n}$ be arbitrary and
increasing without loss of generality. We call a stochastic process $\left\{
X\left(  t\right)  \right\}  $ a \textit{Gaussian stochastic process if
}$\left(  X\left(  t_{1}\right)  ,...,X\left(  t_{n}\right)  \right)  $ has a
multivariate normal distribution.
\end{definition}

Definition \ref{Def 2.1} implies that a stochastic process that is Gaussian is
completely characterized by the properties of its mean and covariance 
matrix. A natural question to ask is what sort of processes might satisfy this 
requirement, and how one proves this property. We next give several 
examples relating to cases of particular interest in the broader literature. 
More details can be found in \cite{RO}.

\begin{definition}
\label{Def 2.2}Let $W\left(  t\right)  $ be a stochastic process. We call
$W\left(  t\right)  $ Brownian motion if

(i) $W\left(  0\right)  =0$

(ii) $W\left(  t\right)  $ is almost surely continuous

(iii)\ $W\left(  t\right)  $ has independent increments with a normal
distribution, i.e.%
\begin{equation}
W\left(  t\right)  -W\left(  s\right)  \sim\mathcal{N}\left(  0,t-s\right)  .
\end{equation}

\end{definition}

\begin{definition}
\label{Def 2.3}We call a Brownian motion conditioned on $W\left(  T\right)
=0$, denoted $\left\{  W_{b}\left(  t\right)  \right\}  _{0\leq t\leq T}$ a
\textit{Brownian bridge}. An equivalent definition is to set%
\begin{equation}
W_{b}\left(  t\right)  =W\left(  t\right)  -\frac{t}{T}W\left(  T\right)
\text{.}%
\end{equation}
In particular, we note that $W_{b}\left(  0\right)  =W_{b}\left(  T\right)
=0$.
\end{definition}

\begin{definition}
We call a process $W_{o}\left(  t\right)  $ stationary Ornstein-Uhlenbeck if
it is defined by%
\begin{equation}
W_{o}\left(  t\right)  =e^{-\alpha t}W\left(  e^{2\alpha t}t\right)  .
\end{equation}

\end{definition}

The stationary Ornstein-Uhlenbeck process has the important property that its
associated covariance matrix is invariant under translation in time. 
A basic result is the following.

\begin{lemma}
\label{Prop 2.3}(a)\ Let $\left\{  W\left(  t\right)  \right\}  _{t\geq0}$ be
a Brownian motion. Then $W\left(  t\right)  $ is a Gaussian stochastic process.

(b)\ The \textit{integrated Brownian motion} $S\left(  t\right)  $ given by
$S\left(  t\right)  :=\int_{0}^{t}W\left(  r\right)  dr$. Then $S\left(
t\right)  $ is a Gaussian stochastic process. Further, defining $Z\left(
t\right)  $ as integrated Brownian motion together with a drift, $Z\left(
t\right)  $ is a Gaussian stochastic process.

(c) Let $T>0$ be given and $W\left(  t\right)  $ be a Brownian motion on
$\left[  0,T\right]  $. The stochastic process $W_{b}\left(  t\right)
=W\left(  t\right)  -\frac{t}{T}W\left(  T\right)  $, called \textit{Brownian
bridge}, is a Gaussian stochastic process.

(d) Define $Z\left(  t\right)  =\int_{0}^{t}W_{b}\left(  r\right)  dr$. Then
$Z\left(  t\right)  $ is also a Gaussian stochastic process.
\end{lemma}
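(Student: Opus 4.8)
The plan is to prove each of the four parts by reducing the claim to the defining criterion: a stochastic process is Gaussian precisely when every finite-dimensional marginal $(X(t_1),\dots,X(t_n))$ is multivariate normal (Definition \ref{Def 2.1}). For part (a), I would fix arbitrary times $0 \le t_1 < \dots < t_n$ and write the increments $W(t_1), W(t_2)-W(t_1), \dots, W(t_n)-W(t_{n-1})$. By property (iii) of Definition \ref{Def 2.2} these are independent and each is univariate normal, hence their joint law is multivariate normal (a vector of independent normals is multivariate normal, and the stated linear-combination characterization of multivariate normality is closed under such direct sums). Since $(W(t_1),\dots,W(t_n))$ is an invertible linear image — the partial-sum map — of this increment vector, and linear images of multivariate normal vectors are again multivariate normal, the conclusion follows. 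I would record the covariance $Cov(W(s),W(t)) = \min(s,t)$ as a by-product, since it is used later.

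For part (b), the key observation is that $S(t) = \int_0^t W(r)\,dr$ is, for each fixed $t$, an $L^2$-limit of Riemann sums $\sum_k W(r_k)\,\Delta r_k$, each of which is a fixed linear combination of jointly Gaussian variables by part (a), hence Gaussian; and the class of Gaussian random vectors is closed under $L^2$ (equivalently, in-distribution) limits because the moment generating function / characteristic function of the limit is the pointwise limit of Gaussian ones, which is again of Gaussian form. Applied jointly at times $t_1,\dots,t_n$ — and indeed jointly with values of $W$ itself at finitely many times, which is what the later sections need — this shows $(S(t_1),\dots,S(t_n))$ is multivariate normal. Adding a deterministic drift $ct$ (or more generally a deterministic function) only shifts the mean vector and leaves Gaussianity intact, giving the claim for $Z(t)$. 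Part (c) is immediate: $W_b(t) = W(t) - \frac{t}{T}W(T)$ expresses the finite-dimensional vector $(W_b(t_1),\dots,W_b(t_n))$ as a fixed linear (matrix) transformation of the Gaussian vector $(W(t_1),\dots,W(t_n),W(T))$ from part (a), and linear images of Gaussian vectors are Gaussian. Part (d) then combines the arguments of (b) and (c): either integrate the Gaussian process $W_b$ and invoke the $L^2$-limit closure as in (b), or write $Z(t) = \int_0^t W(r)\,dr - \frac{t^2}{2T}W(T) = S(t) - \frac{t^2}{2T}W(T)$ and use that this is a linear image of the Gaussian vector assembled from part (b) together with $W(T)$.

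The main obstacle, and the only step requiring genuine care, is the $L^2$-limit closure argument in parts (b) and (d): one must justify that the Riemann sums converge in $L^2$ (which needs continuity of $r \mapsto W(r)$ in $L^2$ and a routine second-moment estimate using $Cov(W(s),W(t)) = \min(s,t)$), and that convergence in distribution of a sequence of multivariate normals with convergent means and covariances has a multivariate normal limit — the latter following from continuity of the characteristic function $\phi(\lambda) = \exp\{i\lambda^T\mu - \tfrac12 \lambda^T \Sigma \lambda\}$ in $(\mu,\Sigma)$ and Lévy's continuity theorem, or directly from the moment generating function formula in the Theorem above. Everything else is bookkeeping: identifying the correct linear maps and checking that the relevant covariance matrices are the limits of the approximating ones. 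I would state the $L^2$-limit fact once as a short auxiliary observation and then apply it uniformly in (b) and (d) rather than repeating the estimate.
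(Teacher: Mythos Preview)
Your proposal is correct and follows essentially the same route as the paper: part (a) via the increment decomposition, parts (c) and (d) as linear images of Gaussian vectors, and part (b) by approximating $S(t)$ with Riemann sums in $W$ and passing to the limit. The only cosmetic difference is that the paper dispatches the limit step in (b) by citing a functional Central Limit Theorem, whereas you spell it out via $L^2$ convergence and closure of Gaussians under distributional limits with convergent first and second moments; your version is, if anything, more self-contained.
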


Part (b) is proven in the Appendix. A basic result that follows from the
standard theory is given below.

\begin{theorem}
\label{Thm 2.1}(\textbf{Covariance Matrices for Brownian Motion and Brownian
Bridge}). Let $T$ and $\left\{  t_{i}\right\}  _{i=1}^{N}\in\left[
0,T\right]  $ be given.

(a) Let $\left\{  S\left(  t\right)  \right\}  _{t>0}$ be an integrated
Brownian motion with piecewise linear drift. Then the covariance at times
$0\leq s\leq t$ is given by%
\begin{equation}
Cov\left[  S\left(  t\right)  ,S\left(  s\right)  \right]  =s^{2}\left(
\frac{t}{2}-\frac{s}{6}\right)  \label{covMatrixIBM}%
\end{equation}

(b) For an integrated Brownian bridge $Z\left(  t\right)  =\int_{0}%
^{t}W\left(  r\right)  -\frac{r}{T}W\left(  T\right)  dr$, the covariance
matrix at times $0\leq s\leq t\leq T$ is given by%
\begin{equation}
Cov\left[  Z\left(  t\right)  ,Z\left(  s\right)  \right]  =s^{2}\left(
\frac{t}{2}-\frac{s}{6}\right)  -\frac{1}{T}\frac{t^{2}s^{2}}{4}
\label{covMatrixIBB}%
\end{equation}
For $s$ and $t$ outside of $\left[  0,T\right]  $, the variance of $Z\left(
t\right)  $ is constant but nonzero.
\end{theorem}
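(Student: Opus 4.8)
The plan is to reduce both identities to iterated integrals of the elementary Brownian covariance $\mathbb{E}\left[W(u)W(v)\right]=\min(u,v)$, which itself follows from the independent‑increment property in Definition \ref{Def 2.2}: for $u\le v$, $\mathbb{E}\left[W(u)W(v)\right]=\mathbb{E}\left[W(u)\left(W(v)-W(u)\right)\right]+\mathbb{E}\left[W(u)^{2}\right]=0+u$. Everything else is bookkeeping.

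For part (a), I would first note that adding a deterministic (here piecewise linear) drift to $S(t)$ changes no covariance, so it suffices to treat $S(t)=\int_{0}^{t}W(r)\,dr$. Since $\mathbb{E}\left[W(r)\right]=0$ we get $\mathbb{E}\left[S(t)\right]=0$, hence $Cov\left[S(t),S(s)\right]=\mathbb{E}\left[S(t)S(s)\right]$. The bound $\mathbb{E}\left|W(u)W(v)\right|\le\sqrt{uv}$ together with a.s.\ path continuity justifies the Fubini--Tonelli interchange
\[
Cov\left[S(t),S(s)\right]=\int_{0}^{t}\!\!\int_{0}^{s}\min(u,v)\,dv\,du .
\]
Splitting the inner integral at $v=u$ when $u\le s$, and then (using $s\le t$) the outer integral at $u=s$, a short computation gives $\int_{0}^{s}\left(us-\frac{u^{2}}{2}\right)du+\int_{s}^{t}\frac{s^{2}}{2}\,du=s^{2}\left(\frac{t}{2}-\frac{s}{6}\right)$, which is (\ref{covMatrixIBM}).

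For part (b), I would write $Z(t)=\int_{0}^{t}W(r)\,dr-\frac{W(T)}{T}\int_{0}^{t}r\,dr=S(t)-\frac{t^{2}}{2T}W(T)$ and expand $Cov\left[Z(t),Z(s)\right]$ by bilinearity into $Cov\left[S(t),S(s)\right]$ (known from (a)), the cross terms $Cov\left[S(t),W(T)\right]$ and $Cov\left[S(s),W(T)\right]$, and $Var\left(W(T)\right)=T$. Each cross term is again elementary: since $u\le t\le T$,
\[
Cov\left[S(t),W(T)\right]=\int_{0}^{t}\min(u,T)\,du=\int_{0}^{t}u\,du=\frac{t^{2}}{2},
\]
and likewise with $s$ in place of $t$. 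Substituting, the two contributions $-\frac{s^{2}t^{2}}{4T}$ from the cross terms and the single $+\frac{s^{2}t^{2}}{4T}$ from $Var(W(T))$ combine to leave $Cov\left[Z(t),Z(s)\right]=s^{2}\left(\frac{t}{2}-\frac{s}{6}\right)-\frac{1}{T}\frac{t^{2}s^{2}}{4}$, which is (\ref{covMatrixIBB}). For the final assertion, I would invoke the extension $W_{b}\equiv0$ off $[0,T]$ that is used to define the initial data on all of $\mathbb{R}$: then $Z$ is eventually constant, so for $t\ge T$ one has $Z(t)=Z(T)$ and $Var\left(Z(t)\right)=Var\left(Z(T)\right)=T^{3}/12$, a fixed nonzero value.

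All of this is routine; the only points requiring any care are the Fubini justification in (a) and keeping track of the several cross‑covariance terms and sign cancellations in (b). The one genuine ``obstacle,'' such as it is, is ensuring that the treatment of the drift and of the off‑$[0,T]$ extension of $W_{b}$ is stated consistently with the way these processes are actually used in Sections 4--6.
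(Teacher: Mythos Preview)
Your proposal is correct and follows essentially the same approach as the paper's own proof in the Appendix: reduce to the double integral of $\min(u,v)$ via Fubini for (a), and for (b) write $Z(t)=S(t)-\tfrac{t^{2}}{2T}W(T)$, expand by bilinearity, and use $Cov[S(t),W(T)]=t^{2}/2$. The only cosmetic difference is that the paper obtains the cross term by first invoking independence of $W(T)-W(t)$ from $\{W(r):r\le t\}$ before integrating, whereas you use $\min(u,T)=u$ directly; your Fubini justification and your explicit evaluation $Var(Z(T))=T^{3}/12$ are slightly more detailed than what the paper provides.
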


An outline of the proof of Theorem \ref{Thm 2.1} is provided in the Appendix.
\begin{remark}
Later on, we consider Brownian motion on the interval $\left(0,1\right)$. 
In order to use our results on a different interval, one can instead consider 
Brownian motion on a translated and stretched interval, i.e. $\left(-M,M\right)$ 
where $B\left(-M\right)=0$ is the starting point instead of $0$ as is typically 
used.\label{BMextend}
\end{remark}

\section{Generalizations of Burgers' equation with random initial data}

We consider a generalization of Burgers' equation by considering the flux
function for $j\geq2$ as follows:%
\begin{equation}
\mathcal{H}\left(  p\right)  =\frac{p^{j}}{j}. \label{genfluxfn}%
\end{equation}
In this way, we obtain the following generalization of (\ref{varburg}).

\begin{lemma}
\label{Prop 2.1}With $\mathcal{H}$ defined by (\ref{genfluxfn}), consider the
conservation law%
\begin{align}
w_{t}+\left(  \mathcal{H}\left(  w\right)  \right)  _{x}  &  =0\text{ in
}\mathbb{R\times}\text{ }\left(  0,\infty\right) \nonumber\\
w\left(  x,0\right)   &  =g^{\prime}\left(  x\right)  \text{ on }%
\mathbb{R}\times\left\{  t=0\right\}.
\end{align}
The solution is then%
\begin{equation}
w\left(  x,t\right)  =sgn\left(  \frac{x-y^{\ast}\left(  x,t\right)  }%
{t}\right)  \left(  \frac{x-y^{\ast}\left(  x,t\right)  }{t}\right)
^{1/\left(  j-1\right)  }. \label{Propgenburgers}%
\end{equation}

\end{lemma}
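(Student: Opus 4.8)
The plan is to derive \eqref{Propgenburgers} directly from the general relation \eqref{introkr}, namely $w(x,t)=g'(y^{*}(x,t))$, by making explicit the Euler--Lagrange (first-order optimality) condition that the greatest minimizer $y^{*}$ in \eqref{ystardef} must satisfy. First I would compute the Legendre transform $\mathcal{L}$ of the flux $\mathcal{H}(p)=p^{j}/j$. A short calculation gives $\mathcal{L}(q)=\sup_{p}\{pq-p^{j}/j\}$, which for $j\geq 2$ yields $\mathcal{L}(q)=\frac{j-1}{j}\,\mathrm{sgn}(q)\,|q|^{j/(j-1)}$ (with the usual care that for even $j$ one works with $|p|^{j}$ so that $\mathcal{H}$ is genuinely superlinear and convex; the excerpt's flux $|p|^{j}$ is the intended object). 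The point is only that $\mathcal{L}$ is $C^{1}$ away from the origin and $\mathcal{L}'(q)=\mathrm{sgn}(q)|q|^{1/(j-1)}$.

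Next I would write down the optimality condition for the minimization $\min_{y}\{t\mathcal{L}(\tfrac{x-y}{t})+g(y)\}$. Differentiating in $y$ and setting the derivative to zero gives $-\mathcal{L}'\!\left(\tfrac{x-y^{*}}{t}\right)+g'(y^{*})=0$, i.e. $g'(y^{*}(x,t))=\mathcal{L}'\!\left(\tfrac{x-y^{*}(x,t)}{t}\right)$. This is the exact analogue, for the present flux, of the Burgers' relation $g'(y^{*})=\tfrac{x-y^{*}}{t}$ quoted in the introduction right before \eqref{introkrb}. Substituting the formula $\mathcal{L}'(q)=\mathrm{sgn}(q)|q|^{1/(j-1)}$ with $q=\tfrac{x-y^{*}}{t}$ into \eqref{introkr} then produces exactly \eqref{Propgenburgers}. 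I would note that the extraction of the optimality condition is justified by the results of \cite{CA} cited in the excerpt: even though $\mathcal{H}$ need not be uniformly convex and $g'$ may have low regularity, \eqref{introkr} holds at points where $g'(y^{*})$ exists, and at such points the minimizer is interior and the above stationarity condition is the correct one.

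The main obstacle I anticipate is handling the non-smoothness of $\mathcal{L}$ at $q=0$ (equivalently, the lack of uniform convexity of $\mathcal{H}$ at $p=0$): one must check that when $x=y^{*}$ the formula \eqref{Propgenburgers} still reads correctly (both sides are $0$), and more importantly that the ``greatest minimizer'' selection $\arg^{+}\min$ in \eqref{ystardef} is compatible with the value of $g'$ used in \eqref{introkr}. This is precisely the subtlety that \cite{CA} was written to resolve, so the cleanest route is to invoke the modified form of \eqref{introkr} from that paper rather than reprove it here; the remaining work is then the purely algebraic substitution of $\mathcal{L}'$, which is routine. A secondary point worth a sentence is the sign/absolute-value bookkeeping for even versus odd $j$, but since $\mathrm{sgn}(\cdot)|\cdot|^{1/(j-1)}$ already encodes both cases uniformly, no separate argument is needed.
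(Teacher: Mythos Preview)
Your proposal is correct and follows essentially the same approach as the paper: compute the Legendre transform $\mathcal{L}(q)=\tfrac{j-1}{j}|q|^{j/(j-1)}$, obtain $\mathcal{L}'(q)=\mathrm{sgn}(q)|q|^{1/(j-1)}$, and then read off $w(x,t)=\mathcal{L}'\!\left(\tfrac{x-y^{*}}{t}\right)$. The paper invokes the identity $w=\mathcal{L}'$ directly, whereas you route through $w=g'(y^{*})$ together with the first-order optimality condition $g'(y^{*})=\mathcal{L}'\!\left(\tfrac{x-y^{*}}{t}\right)$; these are two phrasings of the same step, and your additional remarks about the $q=0$ case and the $\arg^{+}\min$ selection are not needed for the paper's level of rigor but do no harm.
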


\begin{proof}
The Legendre transform of this flux function (\ref{genfluxfn}) is%
\begin{align}
\mathcal{L}\left(  q\right)   &  =\sup_{p}\left\{  pq-p^{j}\right\}
=\left\vert q\right\vert ^{j/\left(  j-1\right)  }-\frac{1}{j}\left\vert
q\right\vert ^{j/\left(  j-1\right)  }\nonumber\\
&  =\frac{j-1}{j}\left\vert q\right\vert ^{j/\left(  j-1\right)  }%
\end{align}
and its derivative is given by%
\begin{equation}
\mathcal{L}^{\prime}\left(  q\right)  =sgn\left(  q\right)  q^{1/\left(
j-1\right)  }.
\end{equation}
The solution $w\left(  x,t\right)  $ to the conservation law (\ref{cl}) can
then be expressed as%
\[
w\left(  x,t\right)  =\mathcal{L}^{\prime}\left(  \frac{x-y^{\ast}\left(
x,t\right)  }{t}\right)  =sgn\left(  \frac{x-y^{\ast}\left(  x,t\right)  }%
{t}\right)  \left\vert \frac{x-y^{\ast}\left(  x,t\right)  }{t}\right\vert
^{1/\left(  j-1\right)  }%
\]

\end{proof}

\begin{remark}
For Burgers' equation, $j=2$ and the relation (\ref{Propgenburgers}) reduces
to%
\[
w\left(  x,t\right)  =\frac{x-y^{\ast}\left(  x,t\right)  }{t},
\]
i.e., the result (\ref{introkrb}).
\end{remark}

\begin{lemma}
\label{Prop 2.2}Consider the conservation law as in Lemma \ref{Prop 2.1} with
$\mathcal{H}\left(  p\right)  =\left\vert p\right\vert $ with initial data
$g^{\prime}$. Then the solution is given by%
\begin{align}
w\left(  x,t\right)   &  =g^{\prime}\left(  y^{\ast}\left(  x,t\right)
\right)  \text{ at points where }g^{\prime}\text{ is continuous,}\nonumber\\
w\left(  x,t\right)   &  =0\text{ elsewhere,}%
\end{align}
where $y^{\ast}$ is the value of the minimizer as before.
\end{lemma}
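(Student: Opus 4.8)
The plan is to reduce the statement to two facts already in hand: the explicit Legendre transform of $\mathcal{H}(p)=|p|$ recorded above, and the representation $w(x,t)=g'(y^{*}(x,t))$ from (\ref{introkr}), valid wherever $g'(y^{*})$ exists. Substituting $\mathcal{L}$ into (\ref{clsol}), and noting that the penalty $t\mathcal{L}((x-y)/t)$ vanishes exactly when $y\in[x-t,x+t]$ and equals $+\infty$ off that set, one gets $u(x,t)=\min_{y\in[x-t,x+t]}g(y)$, so that $y^{*}(x,t)=\arg^{+}\min$ is the largest point of the interval of dependence $[x-t,x+t]$ at which $g$ attains its minimum there. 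The problem is thereby reduced to differentiating in $x$ a running minimum of $g$.

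At every $(x,t)$ for which $g'$ is continuous at $y^{*}(x,t)$ the identity $w(x,t)=g'(y^{*}(x,t))$ is then immediate from (\ref{introkr}). I would record the dichotomy that reconciles this with the ``$w=0$'' branch: if $y^{*}$ lies in the open interval $(x-t,x+t)$ it is an interior local minimizer of $g$, so Fermat's theorem forces $g'(y^{*})=0$ and hence $w(x,t)=0$; only when $y^{*}$ is one of the endpoints $x\mp t$ can $g'(y^{*})$ be nonzero, and there $u(\cdot,t)$ agrees locally with $g(x-t)$ or $g(x+t)$, directly yielding $w=g'(y^{*})$.

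The remaining and genuinely delicate point is the case in which $g'$ is discontinuous at $y^{*}$, i.e. $y^{*}$ is a vertex of $g$, where one must show $w(x,t)=0$. When such a vertex $y^{*}$ lies in $(x-t,x+t)$ and is the minimizer, its one-sided derivatives satisfy $g'(y^{*-})\le 0\le g'(y^{*+})$, so $g$ has a local minimum there (strict off a null set), and $y^{*}$ remains the minimizer of $g$ over $[x'-t,x'+t]$ for all $x'$ near $x$; hence $u(\cdot,t)$ is locally constant at $x$ and $w(x,t)=\partial_{x}u=0$. The awkward sub-case is a vertex that coincides with an endpoint of the interval of dependence: for each discontinuity location $y_{0}$ of $g'$ this pins $(x,t)$ to one of the two lines $x\mp t=y_{0}$, a set of measure zero on which the entropy solution is determined only up to a null set and one assigns $w=0$ by convention --- this is exactly the ``modified form of (\ref{introkr}) accounting for the vertices of $g$'' alluded to earlier. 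Making this one-sided-derivative and null-set bookkeeping precise, rather than the main reduction, is where the real work lies; alternatively one can simply invoke the modified (\ref{introkr}) directly once it is established.
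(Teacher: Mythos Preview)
Your proposal is correct and follows the same route as the paper: compute the Legendre transform $\mathcal{L}(q)=0$ for $|q|\le 1$ and $+\infty$ otherwise, then invoke the solution formula (\ref{clsol})/(\ref{introkr}). The paper's own proof stops there with ``the result then follows immediately from the solution formula,'' whereas you go further and actually justify the $w=0$ branch via the interior-minimum/Fermat argument and the vertex bookkeeping --- so your write-up is more complete than the paper's, not less.
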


\begin{proof}
The Legendre transform of $\mathcal{H}$ is simply%
\begin{equation}
L\left(  q\right)  =\left\{
\begin{array}
[c]{c}%
0\\
+\infty
\end{array}
\right.
\begin{array}
[c]{c}%
\left\vert q\right\vert \leq1\\
\left\vert q\right\vert >1
\end{array}
.
\end{equation}
The result then follows immediately from the solution formula (\ref{clsol}).
\end{proof}

This result is particularly of interest as it shows, for the special case of
the flux function $\mathcal{H}\left(  p\right)  =\left\vert p\right\vert $,
that the solution of the conservation law reduces to finding the minimum of
the integral of the initial data, i.e. the minimum of $g$. For example, if
$g^{\prime}$ is given by a Brownian motion, the solution at given $x$ and $t$
is given by solving the problem of finding the minimum of an integrated
Brownian motion on a finite interval (whose endpoints are dependent on $x$ and
$t$). The integrated Brownian motion is a Gaussian stochastic process (Lemma
\ref{Prop 2.3} (b)), so the minimizer $y^{\ast}$ can easily be approximated by
using the mean and covariance matrix (Theorem \ref{Thm 2.1} (a)).

For example, if the initial condition $g^{\prime}$ consists of $\pm1$ with
probabilities $p$ and $1-p$, then one has
$w\left(  x,t\right)  \in\left\{  0,g^{\prime}\left(  x-t\right)  ,g^{\prime
}\left(  x+t\right)  \right\}  $. The value taken by $w$ depends on whether
the minimum is attained on the interior, left endpoint, or right endpoint of
the interval $\left[  x-t,x+t\right]  $.

\begin{remark}
\label{Rmk Range}Due to the Legendre transform $L$ taking on infinite value
outside of $\left\vert q\right\vert >1$, for a given $\left(  x,t\right)  $,
the minimizer $y^{\ast}\left(  x,t\right)  $ takes its value in the domain%
\begin{equation}
\left\vert x-y\right\vert <t.
\end{equation}
This is a manifestation of the finite domain of influence strictly enforced,
i.e. within a cone $y=x\pm t$.
\end{remark}

\begin{theorem}
\label{Thm 2.2}Consider the conservation law (\ref{cl}) with flux function
given by (\ref{genfluxfn}), i.e. $\mathcal{H}\left(  p\right)  =\frac{p^{j}%
}{j}$, $j\geq2$. For Brownian motion initial conditions one has, with $y^{\ast}$
depending on $\left(  x,t\right)  $,\newline%
\begin{equation}
Var\left(  w\left(  x,t\right)  \right)  =t^{-\frac{2}{j-1}}Var\left(
sgn\left(  x-y^{\ast}\right)  \left\vert x-y^{\ast}\right\vert ^{\frac{1}%
{j-1}}\right),  \label{Sec3me}%
\end{equation}
noting that $y^{*}$ is short for $y^{*}\left(x,t\right)$.
\end{theorem}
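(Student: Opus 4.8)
The plan is to deduce the identity directly from the solution formula of Lemma~\ref{Prop 2.1} together with the elementary scaling behaviour of the variance functional, so that essentially all of the analytic content has already been carried out in \cite{CA} and in the preceding lemmas; the argument here is short.

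First I would check that Brownian motion is an admissible initial datum. As the initial condition $g'$, a Brownian sample path is almost surely continuous (indeed locally H\"older), so the regularity hypotheses under which (\ref{introkr}) and hence (\ref{Propgenburgers}) were established in \cite{CA} are met; moreover its antiderivative $g$, an integrated Brownian motion, is a Gaussian stochastic process by Lemma~\ref{Prop 2.3}(b) and is in fact $C^{1}$, so for each fixed $(x,t)$ with $t>0$ the greatest minimizer $y^{\ast}(x,t)$ of (\ref{ystardef}) is well defined and is a genuine random variable, being measurable as the largest argmin of an almost surely continuous random function. Consequently Lemma~\ref{Prop 2.1} applies pathwise and gives
\[
w(x,t)=\mathcal{L}'\!\left(\frac{x-y^{\ast}(x,t)}{t}\right)
= sgn\!\left(\frac{x-y^{\ast}(x,t)}{t}\right)\left\vert \frac{x-y^{\ast}(x,t)}{t}\right\vert^{1/(j-1)}.
\]

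Next I would use that $t>0$ is deterministic. Since $sgn(z/t)=sgn(z)$ and $|z/t|^{1/(j-1)}=t^{-1/(j-1)}|z|^{1/(j-1)}$ for $t>0$, the previous display rewrites as
\[
w(x,t)=t^{-\frac{1}{j-1}}\, sgn\!\left(x-y^{\ast}(x,t)\right)\left\vert x-y^{\ast}(x,t)\right\vert^{1/(j-1)},
\]
and applying $Var(cY)=c^{2}Var(Y)$ with the deterministic constant $c=t^{-1/(j-1)}$ yields exactly (\ref{Sec3me}). Setting $j=2$ recovers (\ref{varburg}), and one may afterwards specialize to $x=0$ as done later in the paper.

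The only point that genuinely needs attention, rather than being a one-line manipulation, is the well-posedness of the variance on the right-hand side: one should either impose a second-moment condition guaranteeing $sgn(x-y^{\ast})|x-y^{\ast}|^{1/(j-1)}\in L^{2}(\Omega)$, or else read (\ref{Sec3me}) as an identity in $[0,+\infty]$ with both sides finite or infinite together, which is all the scaling argument requires. Since $j\ge 2$ gives $1/(j-1)\le 1$, the map $z\mapsto sgn(z)|z|^{1/(j-1)}$ is continuous with at most linear growth, so finiteness of $Var(w(x,t))$ reduces to tail control on $y^{\ast}(x,t)$; I expect this tail estimate, a statement about the law of the minimizer for integrated Brownian motion (for which Theorem~\ref{Thm 2.1}(a) supplies the relevant covariance structure), to be the only real obstacle, and it is orthogonal to the algebraic identity being proved.
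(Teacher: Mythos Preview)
Your proof is correct and follows essentially the same approach as the paper: both invoke the solution formula from Lemma~\ref{Prop 2.1} and then pull out the deterministic factor $t^{-1/(j-1)}$, with the paper computing the first and second moments separately while you apply $Var(cY)=c^{2}Var(Y)$ directly. Your additional remarks on admissibility of Brownian paths, measurability of $y^{\ast}$, and finiteness of the variance are more careful than what the paper records, but the core argument is the same.
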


\begin{proof}
For a flux function of the form (\ref{genfluxfn}), a minimum is attained when%
\begin{equation}
\mathcal{L}^{\prime}\left(  \frac{x-y^{\ast}}{t}\right)  =sgn\left(
\frac{x-y^{\ast}}{t}\right)  \left(  \frac{x-y^{\ast}}{t}\right)  ^{\frac
{1}{j-1}}=g^{\prime}\left(  y^{\ast}\right)  =w\left(  x,t\right)  .
\end{equation}
One then writes%
\begin{align}
\mathbb{E}\left\{  w\left(  x,t\right)  \right\}   &  =t^{-\frac{1}{j-1}%
}\mathbb{E}\left\{  sgn\left(  \frac{x-y^{\ast}}{t}\right)  \left\vert
x-y^{\ast}\right\vert ^{\frac{1}{j-1}}\right\}  ,\nonumber\\
\mathbb{E}\left\{  w\left(  x,t\right)  ^{2}\right\}   &  =t^{-\frac{2}{j-1}%
}\mathbb{E}\left\{  \left\vert x-y^{\ast}\right\vert ^{\frac{2}{j-1}}\right\}
.
\end{align}
Therefore%
\begin{align}
Var\left(  w\left(  x,t\right)  \right)    & =t^{-\frac{2}{j-1}}%
\{\mathbb{E}\left\{  \left\vert x-y^{\ast}\right\vert ^{\frac{2}{j-1}%
}\right\}  \\
& -\mathbb{E}\left\{  sgn\left(  \frac{x-y^{\ast}}{t}\right)  \left\vert
x-y^{\ast}\right\vert ^{\frac{1}{j-1}}\right\}  ^{2}\}
\end{align}

\end{proof}

\begin{remark}
To obtain the result for Burgers' equation, one sets $j=2$, and observes
that Theorem \ref{Thm 2.2} then yields the result (\ref{varburg}):%
\begin{equation}
Var\left(  w\left(  x,t\right)  \right)  =\frac{1}{t^{2}}Var\left( x- y^{\ast
}\left(  x,t\right)  \right)  .
\end{equation}
Further, for the case of Brownian motion and for an arbitrary time $t$,
$Var\left(g'\left(  y^{\ast}\left(  x,t\right)\right)  \right)  $ is increasing in $x$ 
since $y^{\ast}\left(x,t\right)$ is increasing in $x$ (see 
\cite{CA}), and
thus the variance of $w\left(  x,t\right)$ also increases.
\end{remark}

\begin{remark}
From (\ref{Sec3me}) for Burgers' equation at $x=0$, using Theorem 
\ref{Thm 2.2} we have
\begin{equation}
Var\left(  w\left( 0,t\right)  \right)  \text{ }\sim\text{ }\frac{C}{t^{2}%
}\cdot Var\left[y^{*}\left(0,t\right)\right]\label{Varw0}
\end{equation}
Using the solution formula, we also have
\begin{align}
w\left(x,t\right)=g'\left(y^{\ast}\left(x,t\right)\right)\text{ so }
Var\left[w\left(x,t\right)\right]=Var\left[g'\left(y^{\ast}\left(x,t\right)\right)\right]
\label{solform}
\end{align}
Combining (\ref{Varw0}) and (\ref{solform}) yields
\begin{align}
t^{2}Var\left[g'\left(y^{\ast}\left(0,t\right)\right)\right]=Var\left[y^{\ast}\left(0,t\right)\right].
\label{var1}
\end{align}
Since $g'$ is Brownian motion, one has $Var\left[g'\left(z\right)\right]=z$ for 
any $z>0$. Thus, one can formally write
\begin{align}
Var\left[g'\left(y^{\ast}\left(0,t\right)\right)\right]\sim\mathbb{E}y^{\ast}\left(0,t\right)
\label{var2}
\end{align}
Combining (\ref{var1}) and (\ref{var2}) then yields
\begin{align}
\mathbb{E}y^{\ast}\left(0,t\right)\sim\frac{C}{t^{2}}Var\left[y^{\ast}\left(0,t\right)\right]
\end{align}
Further, if we are willing to make the conjecture that
\begin{equation}
Var\left[y^{*}\left(0,t\right)\right]\sim\mathbb{E}\left[y^{*}\left(0,t\right)\right]^{p},
\end{equation}
for some power $p\in\mathbb{N}$, then we can deduce%
\begin{equation}
Var\left(  y^{\ast}\right)  =t^{2}Var\left(  g^{\prime}\left(  y^{\ast
}\right)  \right)  \sim t^{2}\mathbb{E}\left[  y^{\ast}\right]^{p}  \sim
Ct^{2}\cdot t^{p}.
\end{equation}
Then the result (\ref{Varw0}) can be further extended to obtain
\begin{equation}
Var\left(  w\left( 0,t\right)  \right)\sim Ct^{3p-2},
\end{equation}
which seems to formally suggest that the variance of the solution on $x=0$ 
increases as time increases. One can also perform computations using a 
finite difference scheme through a numerical study, e.g. from \cite{SS}.
\end{remark}

\subsection{Spatial density of shocks}

We now present a formal argument that the density of shocks in the solution
$w\left(  x,t\right)  $ in $x$ decreases as $t$ increases for the case
$H\left(  p\right)  =\left\vert p\right\vert $ with any random initial data. 
A shock can only form when there is a transition from Region I to II or III, 
or from Region II to III (see Figure \ref{Fig MS}).
As noted in \ref{Rmk Range}, the range of minimization problem is restricted
to the interval $\left\vert x-y\right\vert \leq t$, so that we have%
\begin{equation}
w\left(  x,t\right)  =\partial_{x}\min_{y\in\left[  x-t,x+t\right]  }g\left(
y\right)  =g^{\prime}\left(  y^{\ast}\left(  x,t\right)  \right)  .
\end{equation}
The minimum of $g\left(  y\right)  $ is either on the interior or attained at
one of the endpoints, $x\pm t$. If it occurs on the interior, we must have
$g^{\prime}\left(  y^{\ast}\right)  =0$. However, in the event of a minimum at
$x+t$, one may have $g^{\prime}\left(  y^{\ast}\right)  =g^{\prime}\left(
x+t\right)  <0$. The case $g^{\prime}\left(  x+t\right)  =0$ will occur on a
set of measure zero, and thus we have $w\left(  x,t\right)  <0$ for this case.
Similarly, if the minimum is attained at $y^{\ast}=x-t,$ we will have
$w\left(  x,t\right)  =g^{\prime}\left(  x-t\right)  >0$. We want to examine
probabilities and consider how minima can change with a small change in the
spatial coordinate.

Let time $t$ be fixed and consider a small change in $x$ from $x$ to $x+\Delta
x$. If the minimizer does not change, then the value of $w\left(  x,t\right)
$ also does not change by the argument above. If it does change, then so does
$g^{\prime}$, and consequently, $w\left(  x,t\right)  $. We calculate the
probability that there is a transition in $w$, i.e. the probability of a lower
minimum on $\left(  x+\Delta x-t,x+\Delta x+t\right)  $ compared to that on
$\left(  x-t,x+t\right)  $. This is given by%
\begin{equation}
\mathbb{P}\left\{  \min\left\{  g\left(  y\right)  :\left\vert x-y\right\vert
<t\right\}  >\min\left\{  g\left(  y\right)  :\left\vert x+\Delta
x-y\right\vert <t\right\}  \right\}  . \label{Sec3pt}%
\end{equation}

In order for (\ref{Sec3pt}) to be nonzero, there must be a minimum outside the
region of overlap, i.e. one of two events. In the first, a new minimum 
exists on the segment $\left(x+t,x+\Delta x+t\right)$, which we term region III, 
whose value is below the original minimum. In the second case, the original 
minimum fell in the range $\left(  x-t,x+\Delta x-t\right),$ (region I), 
which is excluded when moving from $x$ 
to $x+\delta x$ as illustrated in Figure \ref{Fig MS}. In
the latter case, the minimum attained on this segment fails to be in the new
interval when $x$ is increased by $\Delta x$, so the new minimum is a
consequence of the original no longer being contained in the domain of consideration.

In the former case (with the new minimum attained on the right), the minimum
moves lower, so $w\left(  x,t\right)  =\partial_{x}\min\left\{  g\left(
y\right)  \right\}  $ jumps from zero to a negative value. We express this
probability as%
\begin{align}
&  \mathbb{P}\left\{  w\left(  x_{-},t\right)  >w\left(  x_{+},t\right)  \right\}
\nonumber\\
&  =\mathbb{P}\left\{  \min\left\{  g\left(  y\right)  :\left\vert
x-y\right\vert <t\right\}  >\min\left\{  g\left(  y\right)  :x+t<y<x+\Delta
x+t\right\}  \right\}  \label{Sec3pt'}%
\end{align}
In the latter case, the minimum on the left gives rise to a larger minimum in
the central region, so the jump is to a minimum value.

The inequalities above are value for any $x$ and $\Delta x$. In articular, we
could write a set of inequalities for a partition $\left\{  x_{-2}%
,x_{-1},x_{0},x_{1},x_{2},...\right\}  $ of $\mathbb{R}$ with corresponding
$\Delta x_{i}:=x_{i}-x_{i-1}$.
We now perform the same computation for any $x$ but with time $t+\Delta t$ to
obtain the following:%
\begin{align}
&  \mathbb{P}\left\{  w\left(  x,t+\Delta t\right)  >w\left(  x,t+\Delta
t\right)  \right\}  \nonumber\\
&  =\mathbb{P}\left\{
\begin{array}
[c]{c}%
\min\left\{  g\left(  y\right)  :\left\vert x-y\right\vert <t+\Delta
t\right\}  >\\
\min\left\{  g\left(  y\right)  :x+t+\Delta t<y<x+\Delta x+t+\Delta t\right\}
\end{array}
\right\}  \label{Sec3dt}%
\end{align}

\begin{figure}
[htp]
\begin{center}
\includegraphics[width=\linewidth]%
{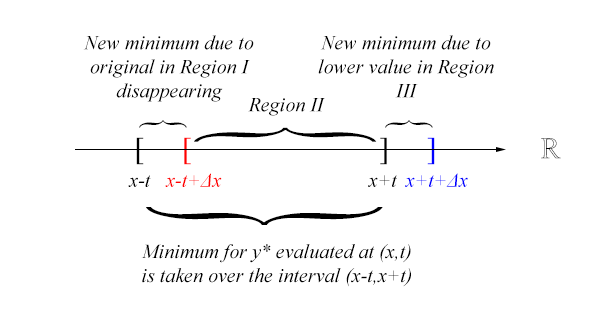}%
\caption{When moving from $\left(  x,t\right)  $ to $\left(  x+\Delta
x,t\right)  $, the interval of minimization is shifted $\Delta x$ to the
right. To have different minimizers at points $x$ and $x+\Delta x,$ at least
one of two events must occur: (i) the minimizer was attained in the leftmost
segment $\left(  x-t,x-t+\Delta x\right)  $ which is omitted for the
minimization problem at $x+\Delta x$ and hence lost; (ii) a new, lower minimum
is introduced by considering the segment $\left(  x+t,x+t+\Delta x\right)  $
on the right and hence gained.}%
\label{Fig MS}%
\end{center}
\end{figure}

\noindent We can now evaluate the probability in (\ref{Sec3dt}) to obtain the
probability of a transition for $\left(  x-\Delta t,t+\Delta t\right)  \,$,
i.e. obtain%
\begin{align}
&  \mathbb{P}\left\{  w\left(  x-\Delta t,t+\Delta t\right)  >w\left(
x_{+}-\Delta t,t+\Delta t\right)  \right\}  \nonumber\\
&  =\mathbb{P}\left\{
\begin{array}
[c]{c}%
\min\left\{  g\left(  y\right)  :x-t-2\Delta t<y<x+t\right\}  >\\
\min\left\{  g\left(  y\right)  :x+t<y<x+\Delta x+t\right\}
\end{array}
\right\}  .\label{Sec3dt''}%
\end{align}
We now compare (\ref{Sec3dt''}) with (\ref{Sec3pt'}) noting that the second
terms are identical. One has the inequality%
\begin{equation}
\min\left\{  g\left(  y\right)  :\left\vert x-y\right\vert <t\right\}
\geq\min\left\{  g\left(  y\right)  :x-t-2\Delta t<y<x+t\right\}  .
\end{equation}
In other words, in the notation of uniformly spaced points $\left\{
x_{i}\right\}  _{i\in\mathbb{Z}}$, we have%
\begin{equation}
\mathbb{P}\left\{  w\left(  x_{i},t\right)  >w\left(  x_{i+1},t\right)
\right\}  \geq\mathbb{P}\left\{  w\left(  x_{i+1},t+\Delta t\right)  >w\left(
x_{i+1}-\Delta t,t+\Delta t\right)  \right\}  \label{downshock}%
\end{equation}
and%
\begin{equation}
\mathbb{P}\left\{  w\left(  x_{i+1},t\right)  >w\left(  x_{i},t\right)
\right\}  \geq\mathbb{P}\left\{  w\left(  x_{i},t+\Delta t\right)  >w\left(
x_{i}+\Delta t,t+\Delta t\right)  \right\}  .\label{upshock}%
\end{equation}
By combining (\ref{downshock}) and (\ref{upshock}) and then summing over all
$i$, one sees that the density of jumps is decreasing in time, i.e.%
\begin{align}
& \sum_{i}\mathbb{P}\left\{  \text{jump between }x_{i}\text{ and }%
x_{i+1}\right\}  \\
& \geq\sum_{i}\left\{
\begin{array}
[c]{c}%
\mathbb{P}\left\{  w\left(  x_{i+1},t+\Delta t\right)  >w\left(
x_{i+1}-\Delta t,t+\Delta t\right)  \right\}  \\
+\mathbb{P}\left\{  w\left(  x_{i+1},t+\Delta t\right)  >w\left(
x_{i+1}+\Delta t,t+\Delta t\right)  \right\}
\end{array}
\right\}  .
\end{align}
Further, if the process is stationary, one has a stronger result in that the
local variation at a given $x_{i}$ is also nonincreasing. In the limit, one
may also write%
\begin{equation}
\mathbb{P}\left\{  w\left(  x,t\right)  >w\left(  x_{+},t\right)  \right\}
\geq\mathbb{P}\left\{  w\left(  x_{+},t+\Delta t\right)  >w\left(  \left[
x-\Delta t\right]  _{+},t+\Delta t\right)  \right\}
\end{equation}
and%
\begin{equation}
\mathbb{P}\left\{  w\left(  x,t\right)  <w\left(  x_{-},t\right)  \right\}
\geq\mathbb{P}\left\{  w\left(  x_{-},t+\Delta t\right)\right\}>w\left(\left[  x+\Delta t\right]
_{-},t+\Delta t\right).
\end{equation}
Thus, we can bound the probabilities that the right and left-hand limits 
of the solution at a particular point $x$ in space change as time is increased 
slightly for a transition of the type from Region I or II to III.

In addition to this case, there are potential transitions from Region I 
to II. In order for such a transition to occur, we would need
\begin{equation}
min\left\{g\left(y\right):x-t<y<x+\Delta x - t\right\} < min\left\{g\left(y\right)
: x+\Delta x -t<y<x+t\right\}.
\label{2to3i}
\end{equation}
Considering the same probabilities at $x+\Delta x$ and $t+\Delta t$, 
we have
\begin{align}
min\left\{g\left(y\right):x+\Delta t - \left(t+\Delta t\right)<y<x+\Delta t+\Delta x 
-\left(t+\Delta t \right)\right\} \nonumber\\
< min\left\{g\left(y\right):x+\Delta t+\Delta x - 
\left(t+\Delta t\right)<y<x+\Delta t+t+\Delta t\right\},
\end{align}
i.e.,
\begin{equation}
min\left\{g\left(y\right):x-t<y<x+\Delta x-t\right\}<min\left\{g\left(y\right):
x+\Delta x-t<y<x+t+2\Delta t\right\}.
\label{2to3ii}
\end{equation}

Then if a sample Brownian motion path $\omega$ satisfies \ref{2to3ii}, 
it will satisfy \ref{2to3i}, so one has
\begin{equation}
\mathbb{P}\left\{\text{Transition I}\rightarrow\text{II at }\left(x,t\right)
\right\}\geq\mathbb{P}\left\{\text{Transition I}\rightarrow\text{II at }
\left(x+\Delta x,t+\Delta t\right)\right\},
\end{equation}
and similarly, one shows that the density of shocks arising from 
these transitions is also decreasing in time.

\section{Application to a prototype case}

As discussed earlier, an important application of our methodology of involves the case where 
the flux function is given simply as $H\left(  p\right)  =\left\vert p\right\vert $.
We will first consider this case for (\ref{cl}) to illustrate the main ideas and a 
convergence proof before proceeding
to the more general case in the next section. As we shall see, though the
formulation of this problem is simple, it raises a number of interesting
points and the solution under our methods presents a clear geometric
interpretation. Even for an initial condition given by a stochastic process
such as Brownian motion, the essence of computing the solution profile is 
reduced to calculating in which of three regions Brownian
motion has a minimum. To calculate the solution $w(x,t)$, one needs to 
determine whether the minimum of the Brownian motion is at $x-t$, 
$x+t$, or within $(x-t,x+t) $. The more general case is 
based on the same principles but involves more calculation.

To ease notation, we will present the following prescription as to how one
determines the value of the solution profile $w\left(  x,t\right)  $ for a
particular realization of one Brownian path and for one specific point
$\left(  x,t\right)  \in\mathbb{R\times}(0,\infty)$. Implicit will be the
existence of an underlying, appropriate partition of the real line, the
technical details of which are omitted here and are to be outlined in the 
next section.

First note that the Legendre transform of the flux function $H$ given above
will be $0$ inside a compact interval and $+\infty$ elsewhere. More
specifically, we have%
\begin{equation}
tL\left(  \frac{y-x}{t}\right)  =\left\{
\begin{array}
[c]{c}%
0\\
+\infty
\end{array}
\right.
\begin{array}
[c]{c}%
y\in\left[  x-t,x+t\right]  \\
y\in\left(  -\infty,x-t\right)  \cup\left(  x+t,\infty\right)
\end{array}
.
\end{equation}
To this end, we consider the quantity%
\begin{equation}
tL\left(  \frac{y-x}{t}\right)  +g\left(  y\right)  .\label{qmin}%
\end{equation}

One then has the remarkable simplification that minimizing the
expression (\ref{qmin}) over the entire real line is equivalent to computing
the minimum of $g\left(  y\right)  $ in the interval $\left[  x-t,x+t\right]
$. Although finding the minimum of $g\left(  y\right)  $ for a stochastic
process, say integrated Brownian motion, may not be so simple, for our
purposes it will suffice to determine whether one of three distinct cases can
occur. Indeed, either the minimum of $g\left(  y\right)  $ will occur at one
of the endpoints or somewhere in the interior.

In our setup, we discretize Brownian motion in the following way. Without loss
of generality (see Remark \ref{BMextend}), consider the interval $\left[  0,1\right]  $ and let
$M\in\mathbb{N}$ be given. Divide the interval into $2^{M}$ pieces, resulting
in a partition $\left\{  r_{n}^{\left(  M\right)  }\right\}  $, and choose
random variables so that the discretized Brownian motion is constant on each subinterval,
with its values prescribed by a random walk. Our goal is to show that the 
solution with initial conditions given by the 
discretized version of the integrated Brownian motion indeed converges to
integrated Brownian motion as we let $M\rightarrow\infty$. We define our 
setup of the discretization more formally in Definition \ref{Def 1} before introducting 
our main results.

We will denote our discretized Brownian motion by $W_{M}$, and the 
integrated process as $I_{M}=\int W_{M}$. Note that our procedure 
superficially resembles the classical construction of Brownian motion as a 
limit of interpolated (and thus continuous) random variables. However, 
using the interpolated random walk would not be as useful in our 
case, as the minima of its integral will not general be at the specified 
$n$ points (vertices of $W_{M}$ and $L$).

We define
\begin{equation}
g_{M}^{\prime}\left(  r\right)  =W_{M}\left(  r\right)  :=W\left(
r_{i}\right)  \ \ \ for\ r\in\lbrack r_{i},r_{i+1}).
\end{equation}
We then integrate and define%
\begin{equation}
I_{M}\left(  t\right)  =\int_{0}^{t}W_{M}\left(  s\right)  ds,\label{disIBM}%
\end{equation}
where (\ref{disIBM}) defines our discretized integrated Brownian motion. The
initial value problem we want to consider then becomes (using superscripts 
for $M$ to ease notation)
\begin{equation}
w_{t}^{M}+\left\vert w^{M}\right\vert _{x}=0,\text{ }w^{M}\left(  x,0\right)
=g_{M}^{\prime}\left(  x\right)  .\label{Ninitial}%
\end{equation}
We construct a solution to this discretized problem. A key result, derived in
Section 3.1, is that%
\begin{equation}
w^{M}\left(  x,t\right)  =\left\{
\begin{array}
[c]{c}%
0\\
g_{N}^{\prime}\left(  x-t\right)  \\
g_{N}^{\prime}\left(  x+t\right)
\end{array}
\right.
\begin{array}
[c]{c}%
\text{if interior minimum}\\
\text{if minimum is left endpoint}\\
\text{if minimum is right endpoint}%
\end{array}
.\label{Nsol}%
\end{equation}
The events that the minimum occurs in these different cases (i.e., either
somewhere in the interior of $\left[  x-t,x+t\right]  $ or at one of the
endpoints) can be described as Gaussian integrals. In other words, the value
of the solution is completely determined by the value of the initial condition
depending on whether the minimum is attained at the left endpoint, in the 
interior, or at the right endpoint. The details are
presented in the next section for the more general case, but take the form of
a series of nested Gaussian integrals.

A second set of questions involve whether the solution converges, and if so 
in what sense to solutions to the limiting problem, i.e.
\begin{equation}
w_{t}+\left\vert w\right\vert _{x}=0,\text{ }w\left(  x,0\right)  =g^{\prime
}\left(  x\right)  \label{IBMinitial}%
\end{equation}
where $g^{\prime}$ is given by a Brownian motion. Indeed, 
one can show that we have convergence in probability of $w^{M}$ to $w$
in the following sense.

Given $A=B+C$, where $\mathbb{P}\left(\vert C \vert>\alpha\right)$ 
will be small, we will want to bound the probabilities of $\left\{ A>0 \right\}$ 
and $\left\{A<0\right\}$ in terms of sets only involving $B$. One may write

\begin{equation}
\mathbb{P}\left(B\leq-\alpha\right)-\mathbb{P}\left(C\geq\alpha\right)
\leq\mathbb{P}\left(A<0\right)\leq\mathbb{P}\left(B\leq\alpha)\right)
+\mathbb{P}\left(C\leq-\alpha\right).
\end{equation}

We use Billingsley Theorem 37.9, p. 534 \cite{BI}. Let $n\in\mathbb{M}$ be fixed 
and let $D$ be the dyadic rationals

\begin{equation}
D:=\left\{k2^{-M}:k\in\mathbb{M}\right\}
\end{equation}

Then we have for Brownian motion, $W$, with $\delta:=2^{-M}$, the 
bound for any $\alpha>0$
\begin{equation}
\mathbb{P}\left[\sup_{r\in\left[0,1\right]}\left\vert W\left(t+\delta \right)
-W\left(t \right)\right\vert >\alpha\right]\leq K\frac{\delta^{2}}{\alpha^{4}},
\end{equation}
where $K$ is a positive real number. By continuity of Brownian motion, 
we can extend this inequality to all $t$ (not just dyadic). We consider for 
simplicity the interval $\left[0,1\right]$ so $k\in\left\{1,2,...,2^{n}\right\}$ 
and let $Q:=2^{M}$, $\left(r_{0}, ..., r_{Q}\right)$ be a set of equally spaced 
points with $r_{0}:=0$ and $r_{N}:=1$ with spacing
\begin{equation}
\Delta r=\delta=2^{-M}.
\end{equation}
We have the, for any $r\in\left[0,1\right]$, the bound
\begin{equation}
\left\vert I\left(r\right)\right\vert\leq\sup_{s\in\left[0,1\right]}\left\vert
W\left(s\right)-W_{N}\left(s\right)\right\vert.
\end{equation}

Now define the following:
\begin{align}
\tilde{A}\left(s\right):=I\left(x-t\right)-I\left(s\right)\text{, }
\tilde{B}\left(  s\right)  :=I_{M}\left(  x-t\right)  -I_{M}\left(  s\right) \nonumber\\
\tilde{C}\left(s\right):=\left\{I\left(x-t\right)-I_{M}\left(x-t\right)\right\}
+\left\{I\left(s\right)-I_{M}\left(s\right)\right\}.
\end{align}
and the real numbers
\begin{align}
A:=\sup_{s\in\left[  x-t,x+t\right]  }\tilde{A}\left(  s\right), \ 
B:=\sup_{s\in\left[  x-t,x+t\right]  }\tilde{B}\left(  s\right),\nonumber\\
C:=\sup_{s\in\left[  x-t,x+t\right]  }\tilde{A}\left(  s\right)
-\sup_{s\in\left[  x-t,x+t\right]  }\tilde{B}\left(  s\right)\label{Def ABC}
\end{align}
Thus, we have $A=B+C$, and $C$ has the property
\begin{equation}
\vert C\vert\leq\sup_{\left(s\in x-t,x+t\right)}\vert\tilde{C}\left(s\right)\vert>\alpha
\end{equation}
Using the above, one can then write
\begin{equation}
\mathbb{P}\left\{\omega:\vert C\vert>\alpha\right\}\leq\mathbb{P}\left\{
\omega:\sup_{s\in\left[0,1\right]}\vert I\left(s\right)-I_{M}\left(s\right)\vert>\alpha\right\}
\leq K\frac{M^{-2}}{\alpha^{4}}
\end{equation}
and recalling that $\delta=\frac{1}{Q}$ by definition, we have
\begin{equation}
\mathbb{P}\left(B<-\alpha\right)-K\frac{M^{-2}}{\alpha^{4}}\leq\mathbb{P}\left(A<0\right)
\leq\mathbb{P}\left(B<\alpha\right)+K\frac{M^{-2}}{\alpha^{4}}
\end{equation}
Then, by choosing $\alpha$ and $N$ appropriately, one has
\begin{equation}
\mathbb{P}\left(  B<-M^{-\frac{2}{5}}\right)  -KM^{-\frac{2}{5}}%
<\mathbb{P}\left(  A<0\right)  <\mathbb{P}\left(  B<M^{-\frac{2}{5}}\right)
+KM^{-\frac{2}{5}}\label{absconv}%
\end{equation}
for a pure number $K$ independent of $N$. We use these inequalities 
in conjunction with the convergence theorems below.

\subsection{Properties of the Brownian motion discretization}

In particular, we want to prove that $I_{N}$ defined by (\ref{disIBM}) is a
Cauchy sequence in probability. Given an $\varepsilon>0$ we choose $J$ such
that $\Delta r_{N_{J}}\leq\varepsilon.$ Let $t\in\lbrack r_{i}^{\left(
J\right)  },r_{i+1}^{\left(  J\right)  }).$ This means that for all $j,k>J$ we
have, for any particular $\omega$ in the Polish space, the
inequality%
\begin{equation}
\left\vert W_{N_{j}}\left(  t\right)  -W_{N_{k}}\left(  t\right)  \right\vert
\leq\sup_{t_{1},t_{2}\in\lbrack r_{i}^{\left(  J\right)  },r_{i+1}^{\left(
J\right)  })}\left\vert W\left(  t_{1}\right)  -W\left(  t_{2}\right)
\right\vert .
\end{equation}
This means that for any $\omega$ and any real number $\alpha$, one has
\begin{align}
&  P\left\{  \omega:\left\vert W_{N_{j}}\left(  t\right)  -W_{N_{k}}\left(
t\right)  \right\vert >\alpha\right\} \nonumber\\
&  \leq P\left\{  \omega:\sup_{t_{1},t_{2}\in\lbrack r_{i}^{\left(  J\right)
},r_{i+1}^{\left(  J\right)  })}\left\vert W\left(  t_{1}\right)  -W\left(
t_{2}\right)  \right\vert >\alpha\right\}  \ .
\end{align}
A similar bound is obtained for the integrals of the discretized Brownian
motion. Using the definitions above we write, for $t\in\left[  0,1\right]  ,$
and any particular $\omega$ so that $W_{N_{j}}$ and $W_{N_{k}}$ are
continuous, bounded functions,%
\begin{align}
\left\vert I_{N_{j}}\left(  t\right)  -I_{N_{k}}\left(  t\right)  \right\vert
&  \leq\int_{0}^{t}\left\vert W_{N_{j}}\left(  s\right)  -W_{N_{k}}\left(
s\right)  \right\vert ds\nonumber\\
&  \leq\sup_{s\in\left[  0,1\right]  }\left\vert W_{N_{j}}\left(  s\right)
-W_{N_{k}}\left(  s\right)  \right\vert \nonumber\\
&  \leq\sup_{s\in\left[  0,1\right]  }\sup_{r\in\left[  0,\Delta r^{\left(
J\right)  }\right]  }\left\vert W\left(  s+r\right)  -W\left(  s\right)
\right\vert
\end{align}
so that for any $\omega$ and $\alpha\in\mathbb{R}$ we have
\begin{equation}
\left\vert I_{N_{j}}\left(  t\right)  -I_{N_{k}}\left(  t\right)  \right\vert
\geq\alpha\ \ \ implies\ \ \sup_{s\in\left[  0,1\right]  }\sup_{r\in\left[
0,\Delta r^{\left(  J\right)  }\right]  }\left\vert W\left(  s+r\right)
-W\left(  s\right)  \right\vert \geq\alpha\ .
\end{equation}
Hence we have%
\begin{align}
\left\{  \omega:\left\vert I_{N_{j}}\left(  t\right)  -I_{N_{k}}\left(
t\right)  \right\vert \geq\alpha\right\}   &  \subset\left\{  \omega
:\sup_{s\in\left[  0,1\right]  }\sup_{r\in\left[  0,\Delta r^{\left(
J\right)  }\right]  }\left\vert W\left(  s+r\right)  -W\left(  s\right)
\right\vert \geq\alpha\right\} \nonumber\\
P\left\{  \left\vert I_{N_{j}}\left(  t\right)  -I_{N_{k}}\left(  t\right)
\right\vert \geq\alpha\right\}   &  \leq P\left\{  \sup_{s\in\left[
0,1\right]  }\sup_{r\in\left[  0,\Delta r^{\left(  J\right)  }\right]
}\left\vert W\left(  s+r\right)  -W\left(  s\right)  \right\vert \geq
\alpha\right\}  . \label{I}%
\end{align}
Next, we claim that if $s$ and $r$ are in their respective closed, bounded
intervals, and one has for some $C\in\mathbb{R}$ and any fixed $\omega$ the
inequality%
\begin{equation}
P\left\{  \left\vert W\left(  s+r\right)  -W\left(  s\right)  \right\vert
\geq\alpha\right\}  \leq C
\end{equation}
then one also has%
\begin{equation}
P\left\{  \sup_{s\in\left[  0,1\right]  }\sup_{r\in\left[  0,\Delta r^{\left(
J\right)  }\right]  }\left\vert W\left(  s+r\right)  -W\left(  s\right)
\right\vert \geq\alpha\right\}  \leq C\ . \label{sup}%
\end{equation}
To prove this note that for each $\omega$ the Brownian motion $W\left(
s;\omega\right)  $ is continuous, so its difference $\left\vert W\left(
s+r\right)  -W\left(  s\right)  \right\vert $ is also continuous in $\left(
r,s\right)  $ and will have a maximum and minimum on $\left[  0,\Delta
r^{\left(  J\right)  }\right]  \times\left[  0,1\right]  $. Let $\left(
r^{\ast},s^{\ast}\right)  $ be a point of maximum. Then there is a sequence,
$\left(  r_{n},s_{n}\right)  $ converging to $\left(  r^{\ast},s^{\ast
}\right)  .$ We have thus%
\begin{equation}
\sup_{s\in\left[  0,1\right]  }\sup_{r\in\left[  0,\Delta r^{\left(  J\right)
}\right]  }\left\vert W\left(  s+r\right)  -W\left(  s\right)  \right\vert
=\left\vert W\left(  s^{\ast}+r^{\ast}\right)  -W\left(  s^{\ast}\right)
\right\vert .
\end{equation}
Let $R_{n}:=\left\vert W\left(  s_{n}+r_{n}\right)  -W\left(  s_{n}\right)
\right\vert $ and $R^{\ast}:=\left\vert W\left(  s^{\ast}+r^{\ast}\right)
-W\left(  s^{\ast}\right)  \right\vert .$ For each $\omega$ we have by
continuity in $s$ for $W\left(  s;\omega\right)  ,$ the pointwise convergence,
$R_{n}\rightarrow R^{\ast}.$ I.e., for any $\varepsilon>0$ we have convergence
with probability $1,$ also expressed as (B p. 75)
\begin{equation}
P\left[  \left\vert R_{n}-R^{\ast}\right\vert \geq\varepsilon\ i.o.\right]
=0.
\end{equation}

This implies convergence in probability, which implies convergence in
distribution, i.e., for any $\alpha$ such that $P\left[  R^{\ast}%
=\alpha\right]  =0,$ which in our case is all $\alpha$ by continuity, one has
\begin{equation}
\lim_{n\rightarrow\infty}P\left[  R_{n}>\alpha\right]  =P\left[
R>\alpha\right]  ,
\end{equation}
i.e., one has%
\begin{align}
\lim_{n\rightarrow\infty}P\left[  \left\vert W\left(  s_{n}+r_{n}\right)
-W\left(  s_{n}\right)  \right\vert >\alpha\right]   &  =P\left[  \left\vert
W\left(  s^{\ast}+r^{\ast}\right)  -W\left(  s^{\ast}\right)  \right\vert
>\alpha\right] \nonumber\\
&  =P\left[  \sup_{s}\sup_{r}\left\vert W\left(  s+r\right)  -W\left(
s\right)  \right\vert \right]  .
\end{align}
Hence, the assumption that $P\left[  \left\vert W\left(  s_{n}+r_{n}\right)
-W\left(  s_{n}\right)  \right\vert >\alpha\right]  \leq C$ implies the
conclusion $\left(  \ref{sup}\right)  ,$ proving the claim.

We use these in conjunction with a basic bound on Brownian motion to prove 
the following Lemma.
\begin{lemma}
For any $\alpha >0,$ one has 
\begin{equation}
P\left\{ \left\vert I_{N_{j}}\left( s\right) -I_{N_{k}}\left( s\right)
\right\vert >\alpha \right\} \leq 3\frac{\left\vert \Delta r^{\left(
J\right) }\right\vert }{\alpha ^{4}}\text{ \ for any }s\in \left[ 0,1\right]
.  \label{6}
\end{equation}
\label{lemmabound}
\end{lemma}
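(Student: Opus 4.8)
The plan is to chain together the estimates already assembled in this subsection. The key intermediate object is the supremum of Brownian increments over a short window, $\sup_{s\in[0,1]}\sup_{r\in[0,\Delta r^{(J)}]}|W(s+r)-W(s)|$, which by the string of inclusions culminating in \eqref{I} and the claim proved around \eqref{sup} dominates (in the sense of events and hence probabilities) the quantity $|I_{N_j}(s)-I_{N_k}(s)|$ for every fixed $s\in[0,1]$ and all $j,k>J$. So the whole Lemma reduces to a single tail bound: showing that this supremum of short-window increments exceeds $\alpha$ with probability at most $3|\Delta r^{(J)}|/\alpha^4$.

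First I would invoke the basic fourth-moment bound for Brownian motion used earlier in the excerpt, namely the Billingsley-type estimate $\mathbb{P}[\sup_{t\in[0,1]}|W(t+\delta)-W(t)|>\alpha]\le K\delta^2/\alpha^4$ with $\delta=\Delta r^{(J)}$; one wants the constant $K$ small enough (the factor $\delta^2$ versus the claimed $\delta$ gives plenty of room, since $\delta\le 1$, so $\delta^2\le\delta$) that the prefactor can be taken to be $3$. Concretely: partition $[0,1]$ into the $2^J$ dyadic cells of length $\delta=\Delta r^{(J)}$; on each cell the increment $|W(s+r)-W(s)|$ with $r\le\delta$ is controlled by $2\sup_{t\in\text{cell}}|W(t)-W(t_i)|$ where $t_i$ is the left endpoint, and the reflection/maximal inequality plus $\mathbb{E}|W(\delta)|^4=3\delta^2$ gives each cell a contribution $O(\delta^2/\alpha^4)$; summing over $2^J=1/\delta$ cells yields $O(\delta/\alpha^4)$. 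Tracking the numerical constant through the maximal inequality and the moment computation $3\delta^2$ is exactly where the ``$3$'' in \eqref{6} comes from, so I would present that bookkeeping carefully.

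The final step is simply to combine: for fixed $s$ and all $j,k>J$,
\begin{align}
P\left\{|I_{N_j}(s)-I_{N_k}(s)|>\alpha\right\}
&\le P\left\{\sup_{s\in[0,1]}\sup_{r\in[0,\Delta r^{(J)}]}|W(s+r)-W(s)|>\alpha\right\}
\le 3\frac{|\Delta r^{(J)}|}{\alpha^4},
\end{align}
which is \eqref{6}. The main obstacle I anticipate is not conceptual but quantitative: getting the clean constant $3$ rather than some unspecified $K$ requires being a little careful about whether one bounds the short-window sup by a one-sided running maximum (costing a factor from reflection) or a two-sided oscillation (costing a factor of $2$), and about how the $2^J$ cell-by-cell bounds are summed; a slightly lossy route would give $3$ easily because $\delta^2\le\delta$ absorbs any moderate overcounting, so in the write-up I would take the mildly wasteful but transparent path. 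I would also note explicitly, as the excerpt does, that the continuity of each Brownian path $W(\cdot;\omega)$ is what lets the supremum over the closed window be realized at an interior maximizing pair $(r^*,s^*)$, so that the event-inclusion argument is rigorous rather than merely formal.
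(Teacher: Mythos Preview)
Your proposal is correct and reaches the same conclusion, but the route you take to bound the supremum tail differs from the paper's. The paper does not partition $[0,1]$ into dyadic cells or invoke a reflection/maximal inequality at all: it simply applies Markov's inequality with $k=4$ to the single increment $W(s+r)-W(s)$, uses $\mathbb{E}|W(s+r)-W(s)|^4=3r^2\le 3[\Delta r^{(J)}]^2$ to get the pointwise bound $3[\Delta r^{(J)}]^2/\alpha^4$, and then lifts this pointwise bound directly to the double supremum by invoking the earlier ``claim'' (the continuity/convergence-in-distribution argument culminating in \eqref{sup}). The combination with \eqref{I} is then immediate, and since $[\Delta r^{(J)}]^2\le|\Delta r^{(J)}|$ the stated form \eqref{6} follows.

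So the paper's argument is a three-line affair that leans entirely on the prior claim that a uniform pointwise tail bound transfers to the supremum, whereas your cell-decomposition with per-cell maximal estimates and a union bound is a genuinely different mechanism---it is the standard rigorous way to obtain a Billingsley-type modulus estimate from scratch, and it does not need that claim at all. Your approach buys you a self-contained and unimpeachable proof at the cost of some bookkeeping with constants; the paper's buys brevity and the clean constant $3$ (coming straight from the Gaussian fourth moment, with no factors of $2$ from reflection or summation) at the cost of depending on the earlier passage-to-the-sup argument. If you write it up your way, be aware that the paper's ``$3$'' is obtained without any union bound, so you would likely need to invoke $\delta^2\le\delta$ to absorb your extra constants, exactly as you anticipated.
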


\begin{proof}
A random variable $X$ with finite kth moment satisfies Markov's 
inequality for any $k\in\mathbb{N}$
\begin{equation}
P\left\{  \left\vert X\right\vert >\alpha\right\}  \leq\frac{E\left[
\left\vert X\right\vert ^{k}\right]  }{\alpha^{k}}.
\end{equation}
We apply this to $X=W\left(  t\right)  -W\left(  s\right)  $ with $k:=4.$ Note
that $X$ is a Gaussian with mean $0$ and variance $\sigma^{2}=\left\vert
t-s\right\vert \ .$ The fourth moment of this Gaussian is then $3\sigma
^{4}=3\left\vert t-s\right\vert ^{2}.$

Thus, we can write, for $r\in\left[  0,\delta\right]  $ the inequality
\begin{equation}
P\left\{  \left\vert W\left(  s+r\right)  -W\left(  s\right)  \right\vert
>\alpha\right\}  \leq\frac{3\left\vert t-s\right\vert ^{2}}%
{\alpha^{4}}. \label{M}%
\end{equation}
From this inequality, one obtains
\begin{equation}
P\left\{  \sup_{s\in\left[  0,1\right]  }\sup_{r\in\left[  0,\Delta r^{\left(
J\right)  }\right]  }\left\vert W\left(  s+r\right)  -W\left(  s\right)
\right\vert >\alpha\right\}  \leq\frac{3\left[  \Delta r^{\left(  J\right)
}\right]  ^{2}}{\alpha^{4}}.
\end{equation}
Combining this with the inequality $\left(  \ref{I}\right)  ,$ for $j,k>J$ one
has for any $t\in\left[  0,1\right]  $%
\begin{align}
P\left\{  \left\vert I_{N_{j}}\left(  t\right)  -I_{N_{k}}\left(  t\right)
\right\vert >\alpha\right\}   &  \leq P\left\{  \sup_{s\in\left[  0,1\right]
}\sup_{r\in\left[  0,\Delta r^{\left(  J\right)  }\right]  }\left\vert
W\left(  s+r\right)  -W\left(  s\right)  \right\vert >\alpha\right\} \nonumber\\
&  \leq\frac{3\left[  \Delta r^{\left(  J\right)  }\right]  ^{2}}{\alpha^{4}}.
\end{align}
\end{proof}
Our next lemma entails passing to a subsequence to achieve the 
convergence in the desired sense. We state this without proof as it follows easily from analysis.
\begin{lemma}
There is a measurable function $I^{\ast }\left( \omega ,s\right) $
such that $I_{N_{j}}\rightarrow I^{\ast }$ in probability and a subsequence
converges $a.u.$ in $\omega $ and uniformly in $s.$\label{meassubseq}
\end{lemma}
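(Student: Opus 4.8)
The plan is to use the quantitative Cauchy estimate of Lemma \ref{lemmabound}, in the form with the supremum over $s\in[0,1]$ kept inside the probability (as established in the course of its proof), to extract a fast-converging subsequence, and then to read off all three assertions --- measurability of the limit, convergence in probability of the full sequence, and almost-uniform convergence of a subsequence uniformly in $s$ --- from that single construction.

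First I would select the subsequence. From the proof of Lemma \ref{lemmabound} one has, for $j,k>J$,
\begin{equation}
\mathbb{P}\Big\{\sup_{s\in[0,1]}\big|I_{N_j}(s)-I_{N_k}(s)\big|>\alpha\Big\}\le \frac{3\big[\Delta r^{(J)}\big]^2}{\alpha^4}.
\end{equation}
Choose $J_1<J_2<\cdots$ so that the mesh sizes shrink fast enough that $\sum_k 3\big[\Delta r^{(J_k)}\big]^2 2^{4k}<\infty$; this is possible since $\Delta r^{(J)}\to 0$ as $J\to\infty$. Setting $E_k:=\big\{\sup_{s}|I_{N_{J_{k+1}}}(s)-I_{N_{J_k}}(s)|>2^{-k}\big\}$ we get $\sum_k\mathbb{P}(E_k)<\infty$, so by Borel--Cantelli (see \cite{BI}) $\mathbb{P}(\limsup_k E_k)=0$. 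Hence for a.e.\ $\omega$ the continuous, piecewise-linear functions $s\mapsto I_{N_{J_k}}(s;\omega)$ are uniformly Cauchy in $s$, so they converge uniformly in $s$ to a function $I^{\ast}(\omega,\cdot)$, which is continuous in $s$; moreover each $\omega\mapsto I^{\ast}(\omega,s)$ is measurable, being an a.e.\ pointwise limit of measurable maps.

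Next I would upgrade this to almost-uniform convergence in $\omega$. Fix $\eta>0$ and pick $K$ with $\sum_{k\ge K}\mathbb{P}(E_k)<\eta$; on $\Omega_\eta:=\Omega\setminus\bigcup_{k\ge K}E_k$, which has $\mathbb{P}(\Omega_\eta)>1-\eta$, one has $\sup_s|I_{N_{J_m}}(s)-I_{N_{J_k}}(s)|\le 2^{1-k}$ for all $m>k\ge K$ uniformly over $\omega\in\Omega_\eta$, and letting $m\to\infty$ gives $\sup_s|I^{\ast}(s)-I_{N_{J_k}}(s)|\le 2^{1-k}$ on $\Omega_\eta$ for $k\ge K$. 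Since $\eta$ is arbitrary this is convergence $a.u.$ in $\omega$ and uniform in $s$. Finally, to obtain convergence in probability of the full sequence, for $\varepsilon,\delta>0$ and any $s$ I would write $\mathbb{P}\{|I_{N_j}(s)-I^{\ast}(s)|>\varepsilon\}\le\mathbb{P}\{|I_{N_j}(s)-I_{N_{J_k}}(s)|>\varepsilon/2\}+\mathbb{P}\{|I_{N_{J_k}}(s)-I^{\ast}(s)|>\varepsilon/2\}$: choosing $k$ large makes the second term $<\delta/2$ by the previous step, and then Lemma \ref{lemmabound} with $J_k$ held fixed makes the first term $<\delta/2$ for all sufficiently large $j$. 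Since $\delta$ is arbitrary, $I_{N_j}\to I^{\ast}$ in probability.

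The one genuinely delicate point is the joint handling of the two parameters $\omega$ and $s$: one must invoke Lemma \ref{lemmabound} with $\sup_{s\in[0,1]}$ taken \emph{inside} the probability, so that a single Borel--Cantelli argument controls the entire sample function $s\mapsto I_{N_{J_k}}(s;\omega)$ at once --- this is exactly what makes the subsequential limit uniform in $s$ rather than merely pointwise. With that observation in place everything else is the standard measure-theoretic chain (Cauchy in probability with a summable rate $\Rightarrow$ a subsequence converging almost uniformly $\Rightarrow$ a measurable limit, and then back to convergence in probability for the whole sequence), which is why the statement may fairly be left as one that ``follows easily from analysis.''
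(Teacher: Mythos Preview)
Your proof is correct, and in fact the paper gives no proof at all of this lemma: it is stated with the remark that it ``follows easily from analysis.'' Your argument supplies exactly the standard chain the paper is gesturing at --- summable Cauchy-in-probability bounds, Borel--Cantelli, almost-uniform convergence of a subsequence, and then recovery of convergence in probability for the full sequence --- so the two are fully compatible.

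One small remark on presentation: your key observation, that the bound of Lemma~\ref{lemmabound} actually holds with $\sup_{s\in[0,1]}$ \emph{inside} the probability, is correct (since in the paper's derivation the pointwise estimate $|I_{N_j}(t)-I_{N_k}(t)|\le\sup_{s}\sup_{r}|W(s+r)-W(s)|$ is uniform in $t$, so taking $\sup_t$ on the left preserves the inclusion of events), but it is worth flagging explicitly that this is a slight strengthening of the lemma as stated, which only asserts the bound for each fixed $s$. Once that is clear, the rest is, as you say and as the paper says, routine.
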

With the definition%
\begin{equation}
B_{M}:=\sup_{s\in \left[ x-t,x+t\right] }\left\{ I_{M}\left( x-t\right)
-I_{M}\left( s\right) \right\} 
\end{equation}
one has from (\ref{absconv}) the bounds
\begin{align}
P\left\{ B_{M}<-M^{-2/5}\right\} -KM^{-2/5}&<P\left\{ A\leq 0\right\}\nonumber\\
&<P\left\{ B_{M}<M^{-2/5}\right\} +KM^{-2/5}.  \label{7}
\end{align}%
By Lemma \ref{meassubseq} we know that there is a subsequence of $\left\{ I_{N_{j}}\right\} 
$ converging a.u., although the convergence in
measure should be adequate. We consider only the subsequence
that converges almost uniformly in $\omega $ and also in measure in $\omega $
uniformly in the $s$ variable. We will just refer to this subsequence as $%
\left\{ I_{j}\right\} $ for brevity rather than $\left\{ I_{N_{j}}\right\} .$

By containment of sets in conjuction with (\ref{7}) for $j\in\mathbb{N}:$ $j^{-2/5}<\varepsilon$, one 
has the inequalities
\begin{eqnarray}
P\left( B_{j}<-\varepsilon \right) -Kj^{-2/5} &<&P\left\{
B_{j}<-j^{-2/5}\right\} -Kj^{-2/5}  \nonumber \\
&<&P\left\{ A\leq 0\right\}   \nonumber \\
&<&P\left\{ B_{j}<j^{-2/5}\right\} +Kj^{-2/5}<P\left( B_{j}<\varepsilon
\right) +Kj^{-2/5}  \label{8}
\end{eqnarray}

Define, analogous to $B_{M}$ above, the quantity
\[
B^{\ast }:=\sup_{s\in \left[ x-t,x+t\right] }\left\{ I^{\ast }\left(
x-t\right) -I^{\ast }\left( s\right) \right\} .
\]

Since Lemma \ref{lemmabound} says that $\left\{ I_{N_{j}}\left( \omega ,s\right) \right\} $
is fundamental in measure uniformly in $s$, we know that there is an $%
I^{\ast }$ to which $I_{N_{j}}$ converges in measure. Then we have
using $I_{j}$ in place of $I_{N_{j}}$ ,%
\[
I_{j}\left( x-t\right) -I\left( s\right) \rightarrow I^{\ast }\left(
x-t\right) -I^{\ast }\left( s\right) \text{ in prob, unif in }s.
\]%
Using Lemma \ref{meassubseq} we then have that 
\begin{eqnarray*}
\sup_{s}\left\{ I_{j}\left( x-t\right) -I\left( s\right) \right\} 
&\rightarrow &\sup_{s}\left\{ I^{\ast }\left( x-t\right) -I^{\ast }\left(
s\right) \right\} \text{ in prob, i.e .,} \\
B_{j} &\rightarrow &B^{\ast }\text{ in prob.}
\end{eqnarray*}%
Convergence in probability implies convergence in distribution, so that we
have%
\[
P\left\{ B_{j}<\alpha \right\} \rightarrow P\left\{ B^{\ast }<\alpha
\right\} .
\]%
Now using this in $\left( \ref{8}\right) $ we take limits and obtain,%
\begin{equation}
P\left\{ B^{\ast }<-\varepsilon \right\} \leq P\left\{ A\leq 0\right\} \leq
P\left\{ B^{\ast }<\varepsilon \right\} .  \label{9}
\end{equation}%
If we consider only the subsequence that converges almost uniformly, then we
have that for any $\delta >0$ we have a set $F$ such that $P\left( F\right)
<\delta $ and on $\Omega \ \backslash \ F$ one has uniform convergence of
continuous functions $B_{j}$ so that $B^{\ast }$ is continuous. Hence we can
take the limit as $\varepsilon \rightarrow 0$ and obtain from $\left( \ref{9}%
\right)$ that
\begin{equation}
P\left\{ B^{\ast }\leq 0\right\} \leq P\left\{ A\leq 0\right\} \leq P\left\{
B^{\ast }\leq 0\right\} .
\end{equation}
Thus we have the following theorem.
\begin{theorem}
In the limit $M\rightarrow\infty$, we have that $\mathbb{P}\left\{
w_{M}\left(x,t\right)=g_{M}^{\prime}\left(x-t\right)
\right\}\rightarrow\mathbb{P}\left\{w\left(x,t\right)
=g^{\prime}\left(x,t\right)\right\}$ for $1\leq i\leq 2N+1$. In other 
words, we have convergence in probability of the solution to the discrete 
problem to that of the continuous problem (\ref{cl}).
\end{theorem}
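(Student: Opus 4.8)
The plan is to split the statement into two parts: (i)~for each of the three regimes --- the minimum of the integrated datum over $[x-t,x+t]$ lying at the left endpoint, at the right endpoint, or in the interior --- the probability of that regime for the discretized problem converges to the probability of the same regime for the continuum problem (\ref{IBMinitial}); and (ii)~on each regime the random value of $w^{M}(x,t)$, namely $0$, $g_{M}'(x-t)=W_{M}(x-t)$, or $g_{M}'(x+t)=W_{M}(x+t)$ by (\ref{Nsol}), converges in probability to the corresponding value $0$, $g'(x-t)=W(x-t)$, or $g'(x+t)=W(x+t)$ for $w$ (Lemma~\ref{Prop 2.2}, Remark~\ref{Rmk Range}). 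Granting (i) and (ii), the assertion $\mathbb{P}\{w_{M}(x,t)=g_{M}'(x-t)\}\to\mathbb{P}\{w(x,t)=g'(x-t)\}$ is exactly (i) for the left regime, and the convergence in probability $w^{M}(x,t)\to w(x,t)$ follows by decomposing $\Omega$ into the three regimes and using that on each one the regime-membership eventually agrees and the value converges.

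For (i) in the left-endpoint regime I would simply organize the estimates already derived above. With $A=\sup_{s\in[x-t,x+t]}\{I(x-t)-I(s)\}$, $B_{M}=\sup_{s\in[x-t,x+t]}\{I_{M}(x-t)-I_{M}(s)\}$, and $C=A-B_{M}$, one has $|C|\le 2\sup_{s}|I(s)-I_{M}(s)|$, controlled by the fourth-moment bound as in Lemma~\ref{lemmabound} and (\ref{absconv}); Lemma~\ref{lemmabound} gives that $\{I_{N_{j}}\}$ is Cauchy in probability uniformly in $s$, Lemma~\ref{meassubseq} produces the limit $I^{*}$ and an almost-uniformly convergent subsequence along which $B_{j}\to B^{*}$ with $B^{*}$ continuous off a set of arbitrarily small probability, and then the $\varepsilon$-buffered squeeze followed by $\varepsilon\to0$ yields $\mathbb{P}\{B_{M}\le0\}\to\mathbb{P}\{B^{*}\le0\}=\mathbb{P}\{A\le0\}=\mathbb{P}\{w(x,t)=g'(x-t)\}$, where the last equality uses that $I^{*}$ has the law of integrated Brownian motion. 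The right-endpoint regime is identical after replacing $x-t$ by $x+t$. The interior regime follows by complementation once one checks that the three regimes partition $\Omega$ up to the null event that the minimum of the integrated datum is attained at two distinct points or at an endpoint with vanishing one-sided slope; this event is null for both $I$ and $I^{*}$ because the finite-dimensional distributions of integrated Brownian motion (and of integrated Brownian bridge) are non-degenerate Gaussians (Lemma~\ref{Prop 2.3}, Theorem~\ref{Thm 2.1}).

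For (ii) I would work regime by regime after passing, along an arbitrary subsequence, to a further subsequence along which $I_{M}\to I$ almost uniformly and $W_{M}\to W$ almost uniformly on the relevant compact interval. On the left-endpoint event one has, a.s., $W(x-t)>0$: if $W(x-t)<0$ then $I$ is strictly decreasing immediately to the right of $x-t$ and the minimum cannot lie there, while $W(x-t)=0$ has probability $0$. Since $I_{M}$ is the integral of a step function, its grid increments are the Riemann sums $I_{M}(r_{k})-I_{M}(r_{0})=\sum_{i<k}W_{M}(r_{i})\,\Delta r$; once $W_{M}$ is uniformly close to $W$ near $x-t$ these are positive for small $k$, while away from $x-t$ the uniform estimate $I_{M}\approx I$ keeps $I_{M}$ above $I_{M}(x-t)$, so the discrete minimum is again at $x-t$ and $w^{M}(x,t)=W_{M}(x-t)\to W(x-t)=w(x,t)$. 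The right-endpoint case is symmetric ($W(x+t)<0$ a.s.), and on the interior event the discrete minimizer is pushed strictly inside for $M$ large, so $w^{M}(x,t)=0=w(x,t)$. Hence $w^{M}(x,t)\to w(x,t)$ along the sub-subsequence, and since the subsequence was arbitrary, in probability. The same three-regime bookkeeping, applied to each of the finitely many affine segments of the Legendre transform and their vertices, gives the statement for $1\le i\le 2N+1$ in the setting of Section~5.

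The main obstacle is the borderline behaviour at the regime boundaries. Both $A$ and $B_{M}$ carry an atom at $0$ --- the left-endpoint event has positive probability --- so $\mathbb{P}\{B_{M}\le0\}\to\mathbb{P}\{A\le0\}$ cannot be read off from weak convergence of $B_{M}$ alone, and a naive continuous-mapping argument fails because the solution map is discontinuous at every endpoint-minimizing path. What rescues (i) is precisely the $\varepsilon$-buffer together with the continuity of $B^{*}$ off sets of arbitrarily small probability furnished by almost-uniform convergence, which lets the two-sided squeeze close as $\varepsilon\to0$; what rescues (ii) is the structural fact that $I_{M}$, being the integral of a step function, has its minimizer on the grid, so that positivity of $W$ near an endpoint \emph{forces} the discrete minimizer to that endpoint rather than merely near it. Both parts also rely on ties and vanishing one-sided slopes occurring only on a null set, which in turn rests on the non-degeneracy of the Gaussian finite-dimensional laws established in Theorem~\ref{Thm 2.1} and Lemma~\ref{Prop 2.3}. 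One should also double-check the identification of $I^{*}$ with integrated Brownian motion in law, so that $\mathbb{P}\{B^{*}\le0\}$ is genuinely the continuum left-endpoint probability.
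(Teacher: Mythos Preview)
Your part~(i) for the left-endpoint regime is exactly the paper's argument: the same decomposition $A=B_{M}+C$, the same fourth-moment control on $C$ via Lemma~\ref{lemmabound}, the same passage to an almost-uniformly convergent subsequence via Lemma~\ref{meassubseq}, and the same $\varepsilon$-buffered squeeze (\ref{absconv})--(\ref{8})--(\ref{9}) closing to $\mathbb{P}\{B^{*}\le0\}=\mathbb{P}\{A\le0\}$. The paper stops there and declares the theorem; it does not write out the right-endpoint or interior cases, nor does it justify the second sentence of the statement (actual convergence in probability of $w^{M}(x,t)$ to $w(x,t)$).

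Your proposal goes further on both counts. The symmetry argument for the right endpoint and the complementation argument for the interior are sound once you know ties and vanishing one-sided slopes are null, which you correctly trace to the non-degenerate Gaussian finite-dimensional laws. Your part~(ii) is genuinely new relative to the paper: the paper never argues that on the left-endpoint event the \emph{discrete} minimizer is eventually also at the left endpoint, which is what converts convergence of regime probabilities into convergence of $w^{M}$ itself. Your mechanism---$W(x-t)>0$ a.s.\ on that event, so nearby grid increments of $I_{M}$ are positive, while away from the endpoint $I_{M}\approx I$ stays above $I_{M}(x-t)$---is the right idea and exploits the piecewise-linear structure of $I_{M}$ in a way the paper does not. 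Your closing caveat about identifying $I^{*}$ with integrated Brownian motion in law is also well placed; the paper takes this for granted.
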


Recalling that $A<0$ is equivalent to $I\left(s\right)$ having its minimum at 
the left endpoint, i.e. (\ref{Def ABC}), we see that the probability that the minimum 
it attained at the left endpoint is a limit of probabilities involving the 
discretizations $I_{M}$.

While we have developed these ideas in the context of $H\left(p\right)=
\left\vert p\right\vert$, the same methodology can be implemented for 
general polygonal flux, as the latter simply intrdouces deterministic drift, and 
the bounds on Brownian motion remain valid.

\section{Analysis of the piecewise linear flux function with Gaussian
stochastic process initial data}

We now consider the flux function in
(\ref{cl}). To be precise, let the flux function $H$ be piecewise linear with
slopes given by $\left\{  m_{i}\right\}  _{i=1}^{N+1}$ in increasing order and
break points $\left\{  c_{i}\right\}  _{i=1}^{N}$. The Legendre transform $L$
will consequently have slopes $\left\{  c_{i}\right\}  _{i=1}^{N}$ and break
points $\left\{  m_{i}\right\}  _{i=1}^{N+1}$, with slope $-c_{N+1-i}$ in the
interval $I_{i}:=\left[  m_{i},m_{i+1}\right]  $, and infinite value outside
the interval $\left[  m_{1},m_{N+1}\right]  $. When evaluated at the argument
$\frac{x-y}{t}$, the function exhibits limiting behavior under small or large
time leading to interesting results in these limits for the solution of the
minimization problem. For small time $t$, the function $L$ is infinite outside
a small interval, making it more likely that a minimum is obtained at a vertex
of $L$. For large $t$, $L$ can be approximated as a single linear segment. In
this case the minimum is then likely to be at a vertex of $g$ rather than at a
vertex of $L$. In numerical studies, this can be illustrated by comparing the
variance in the spatial variable $x$ at given fixed times $t_{1}$, $t_{2}$,
etc. As is apparent in Figure \ref{Fig BM} computed using \cite{SS}, the total
variation tends to decrease as a function of $t$. This augments the formal
calculations shown in Section 3. This is despite the fact that the
probabilistic variance at a point $x$ increases in time.
\begin{figure}
[htp]
\begin{center}
\includegraphics[width=5.5in]%
{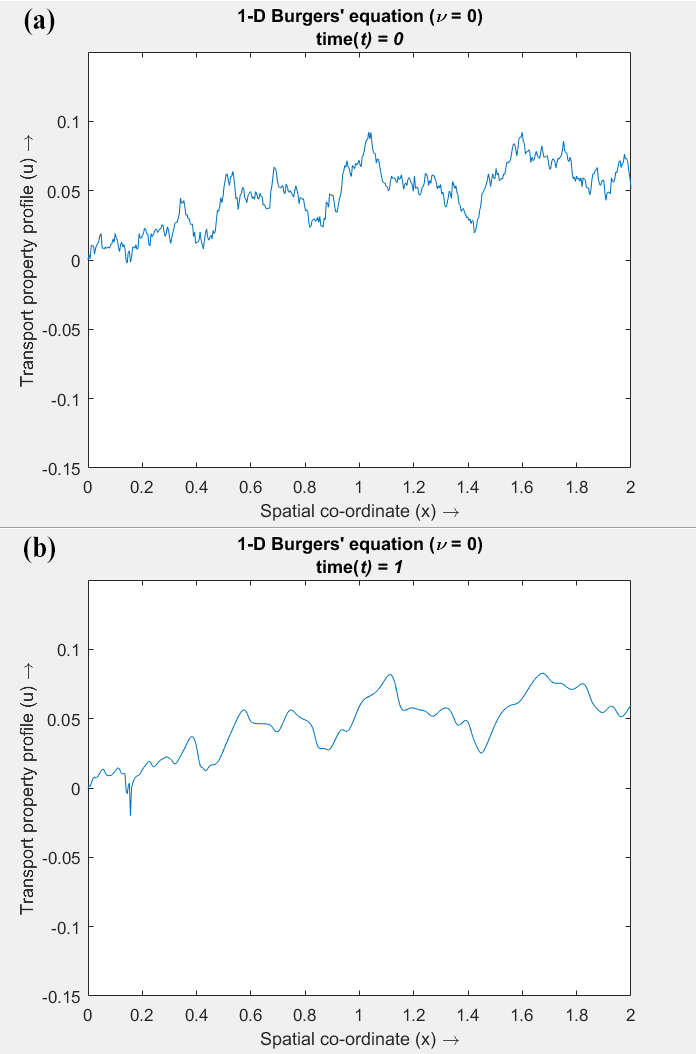}%
\caption{By performing numerical simulations using a finite difference scheme
on Burgers' equation for Brownian motion initial conditions, one can see that
the longer time behavior exhibits less variance in $x$ and appears more
smooth, in accordance with the analytic results.}%
\label{Fig BM}%
\end{center}
\end{figure}

In addition to the intuitive interpretation of the asymptotic limiting
behavior of the solutions, we prove a rigorous result for a broad class of
initial conditions. We consider a discretized approximation to a Gaussian
stochastic process as an initial condition paired with the piecewise linear
flux function. The virtue of this approach is that the result holds for any
stochastic process in this class, and is exact without relying on the
particular structure of say Brownian motion or Brownian bridge. We recall a 
result from \cite{CA}.

\begin{theorem}
\label{Thm 3.1}Let $L$ be polygonal convex, as described above, and $g^{\prime
}$ be piecewise constant. Then
\begin{align}
w\left(  x,t\right)   &  =g^{\prime}\left(  y^{\ast}\left(  x,t\right)
\right)  \text{ when the minimizer }y^{\ast}\text{ is at a vertex of
}L\nonumber\\
&  =L^{\prime}\left(  \frac{x-y^{\ast}\left(  x,t\right)  }{t}\right)  \text{
when the minimizer }y^{\ast}\text{ is at a vertex of }g \label{Thm 3.1a}%
\end{align}
a.e. in $x$ (for fixed $t>0$) is a solution to (\ref{cl}). Should a minimum 
occur at a point where both $g$ and $L$ have a vertex, one has a 
discontinuity in the solution with $w\left(x-,t\right)$ taking the value of the left 
limit $g'\left(x,t\right)$ and $w\left(x,t\right)$ having the value 
$g'\left(x+,t\right)$. Note that for a
fixed $t>0$ and $x$ a.e., one of the cases in (\ref{Thm 3.1a}) occurs.
\end{theorem}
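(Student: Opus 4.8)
The plan is to derive (\ref{Thm 3.1a}) from the Lax--Oleinik representation (\ref{clsol}), which \cite{CA} shows persists for polygonal convex $L$ (no superlinearity or strict convexity needed) and Lipschitz $g'$. Write $u(x,t)=\min_{y\in\mathbb{R}}\{\,tL(\frac{x-y}{t})+g(y)\,\}$, so $w=\partial_x u$, and fix $t>0$. Because $L$ is polygonal convex with vertices $\{m_i\}$ and $g$ (the primitive of the piecewise-constant $g'$) is piecewise affine with corners exactly at the jump set $\{v_j\}$ of $g'$, the map $y\mapsto\phi(y):=tL(\frac{x-y}{t})+g(y)$ is piecewise affine in $y$ with $\phi'(y)=-L'(\frac{x-y}{t})+g'(y)$; hence its minimum is attained at a translated $L$-vertex $y=x-tm_i$, at a $g$-vertex $v_j$, or on a flat piece joining two such points. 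A routine first step records that $u(\cdot,t)$ is Lipschitz (from the representation), hence differentiable for a.e.\ $x$.

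Next I would isolate a Lebesgue-null exceptional set of $x$. For fixed $t$ each candidate value is (piecewise) affine in $x$ --- namely $tL(m_i)+g(x-tm_i)$ at $L$-vertices and $tL(\frac{x-v_j}{t})+g(v_j)$ at $g$-vertices --- so (i) two candidates that are \emph{not} joined by a flat piece tie only on a locally finite $x$-set, and (ii) $y^\ast$ is simultaneously a vertex of $g$ and of $L$ only when $x=v_j+tm_i$, again a discrete set. When the minimum is realized on a flat piece (possible on a positive-measure $x$-set, when a constant value of $g'$ coincides with a slope of $L$), the minimizer set is a whole interval, but then the two formulas in (\ref{Thm 3.1a}) evaluated at its endpoints coincide --- both equalling the common slope value and both equalling $\partial_x u$ --- so (\ref{Thm 3.1a}) still holds there, with $y^\ast$ the greatest minimizer $\arg^+\min$. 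Off the null set above, $y^\ast(x,t)$ is unique and of exactly one of the two types.

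On the good set I would run an envelope (Danskin-type) argument. If $y^\ast=x-tm_i$, then for $x'$ near $x$ the unique minimizer stays of the form $x'-tm_i$, so $u(x',t)=tL(m_i)+g(x'-tm_i)$ and $w=\partial_x u=g'(x-tm_i)=g'(y^\ast)$ --- the first line of (\ref{Thm 3.1a}), consistent with (\ref{introkr}) wherever $g'(y^\ast)$ exists. If $y^\ast=v_j$, the minimality condition $g'(v_j^-)<L'(\frac{x-v_j}{t})<g'(v_j^+)$ is an \emph{open} condition in $x$, so the minimizer is pinned at $v_j$ for nearby $x'$; then $u(x',t)=tL(\frac{x'-v_j}{t})+g(v_j)$ and $w=\partial_x u=L'(\frac{x-v_j}{t})=L'(\frac{x-y^\ast}{t})$ --- the second line. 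That this $w$ solves (\ref{cl}) in the entropy-admissible weak sense then follows by differentiating the Hamilton--Jacobi equation satisfied by $u$ (\cite{CA}); the jumps of $w$ across the discrete coincidence set $\{x=v_j+tm_i\}$ are automatically admissible shocks because $u$ is the viscosity solution, and at such a point the least and greatest minimizers differ, so tracking $y^\ast=\arg^+\min$ from (\ref{ystardef}) delivers the stated one-sided values of $w$.

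The step I expect to be the main obstacle is making the pinning/envelope argument rigorous: one must show that, away from the null exceptional set, $x\mapsto y^\ast(x,t)$ is locally exactly of the predicted shape --- a uniformly translating $L$-vertex, or a frozen $g$-vertex --- so that differentiating $\min_y$ termwise is legitimate; this is precisely where the piecewise-linearity of \emph{both} $L$ and $g$ is used essentially. A secondary, bookkeeping issue is that (\ref{clsol}) only pins down $w=u_x$ up to a null set, so one must invoke (\ref{clsol})--(\ref{introkr}) from \cite{CA} to know that this a.e.-defined function is the genuine entropy solution of (\ref{cl}) rather than reconstruct that theory here.
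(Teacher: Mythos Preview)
The paper does not actually prove this theorem: immediately before the statement it says ``We recall a result from \cite{CA},'' and no proof follows. Theorem~\ref{Thm 3.1} is imported from the companion paper \cite{CA} and used here as a black box (it is invoked in the proof of Theorem~\ref{Thm 3.2}). So there is no in-paper proof to compare your proposal against.

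That said, your sketch is a reasonable reconstruction of how such a result would be established: the piecewise-affine structure of $\phi(y)=tL((x-y)/t)+g(y)$, the dichotomy that minima occur at vertices of $L$ or of $g$, the envelope/Danskin differentiation once the minimizer type is locally constant, and the appeal to \cite{CA} for the fact that the resulting a.e.\ formula is the genuine entropy solution --- these are the natural ingredients. Your identification of the main technical point (showing $y^\ast(x,t)$ is locally of the predicted shape so that the envelope argument is rigorous) is apt, and is presumably what \cite{CA} carries out in detail. One small caution: your open-condition inequality $g'(v_j^-)<L'((x-v_j)/t)<g'(v_j^+)$ tacitly assumes $(x-v_j)/t$ is not itself an $L$-vertex, which you have already excluded on the null set, so this is consistent.
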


This theorem will be needed as the final step in proving our main result of
this section. Before stating the theorem, we clarify the choice of discrete
points at which we will sample the stochastic process. This partition will 
depend on the value of $x$ and $t$. 
Given $x$ and $t$, the Legendre transform $L$ of the flux function will
only be finite on a finite interval, and therein $L^{\prime},$ the quantity of
interest, will take $N$ different values.  We partition each of these intervals 
into pieces at which we sample the values of $L'$ and $g'$. In 
addition, we will need to consider the value at each of the vertices of 
$L$. One is then left with a \textit{finite} number
of points, and computing the minimum amongst these is a far more manageable
task than calculating the minimum along a continuum. Indeed, our main result
presents an explicit formula for computing this minimum, and thus an exact
expression for the expectation value of the solution $w\left(  x,t\right)  $.
One can then consider refining such partitions and taking a limit to the continuum. 
In a computational context, it may be necessary to first fix a partition, and 
restrict oneself to that same grid, but we leave the technical details to 
future works.
\begin{definition}
\label{Def 1}
On the interval $\left[x-m_{N+1}t, x-m_{1}t\right]$ we set grid points 
for a fixed $x$ and $t$ as follows. 
Note that $L$ is divided into $N$ segments on which it takes finite values, and 
has $N+1$ associated vertices. Starting from the leftmost segment, we label 
these vertices as $r_{1,1}, ..., r_{1,n_{1}}, r_{2,1}, ..., r_{i,1}, ..., r_{N+1,1}$. We number the $n_{i}$ 
points that lie between each vertex sequentially using the second index, pairing 
them with the first index representing the vertex most immediately to its left, 
i.e. $r_{1,2}, ..., r_{1,n_{1}}$ for points between the first and second vertex. 
We denote the total number of points (vertices and points between) by $n$, 
and define the intervals $I_{i}^{x,t}:=(r_{i,1},r_{i+1,1})$.
\end{definition}

We illustrate the definition of these partitions in Figure \ref{Fig PI}.
\begin{figure}
[ptb]
\begin{center}
\includegraphics[width=\linewidth]
{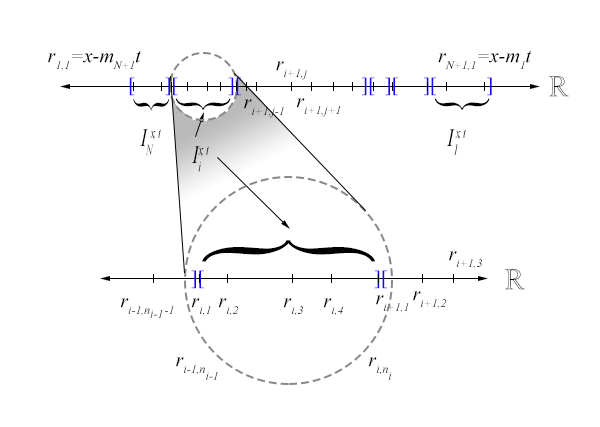}
\caption{Illustration of the partition defined in Definition \ref{Def 1}. Given 
a fixed $x$ and $t$, we split the interval $I^{x,t}=\left[x-m_{N+1}t,x-m_{1}t\right]$ 
into subintervals $I_{i}^{x,t}=\left[x-m_{i+1}t,x-m_{i}t\right]$ and partition 
each of these $N$ intervals into $n_{i}$ points. The problem of finding the 
minimum over the entire interval $I^{x,t}$ is then simplified to finding the 
minimum over these $\sum_{i=1}^{N}n_{i}=n$ points and $N+1$ vertices of 
$L$, which are the endpoints of $I_{i}^{x,t}$.}
\label{Fig PI}
\end{center}
\end{figure}

\begin{definition}
Let $x$ and $t$ be given and choose partitions of $I_{i}^{x,t}$ as described
in Definition \ref{Def 1}. Let the number of such points be denoted by $n_{i}%
$, and define $\sum_{i}n_{i}=:n$. Define $A_{i,j}$ as the inverse of the
covariance matrix of $X\left(  s\right)  $ with respect to these points
$\left\{  s_{i,j}\right\}  $ in the subpartition $\left \{r_{j,2}, ...,r_{j,n_{j}}\right \}$ and suppress the
second index of $A,s,$ etc. Denote the eigenvalues of $A_{i}$ as
$\tilde{\lambda}_{i}$, and define $\tilde{\mu}_{i}$ as the means of each
random variable, i.e.
\begin{equation}
\mu_{i,j}=\mathbb{E}\{g(r_{i,j}\}+
tL(\frac{x-r_{i,j}}{t}) \ for \ j\neq1, \ 1\leq i\leq N.
\end{equation}
The matrix $A_{i}$ can be diagonalized by%
\begin{equation}
A_{i}=U_{i}D_{i}U_{i}^{T}.
\end{equation}
Similarly, define $A,\left\{  \lambda_{i}\right\}  ,\left\{  \mu_{i}\right\}
$ etc. as the covariance matrix, eigenvalues, means of the random variables
for the entire process within the range $(x-t,x+t)$.

Define also $Q_{i}$ as follows:
\begin{equation}
Q_{i}=Q\left(i,x,t\right)  :=\min_{y\in I_{i}^{x,t}%
}\left\{  tL\left(  \frac{x-y}{t}\right)  +g\left(
y \right)  \right\}  \label{Thm1},%
\end{equation}
i.e. the minimum on the $ith$ segment of the flux function $L$.
\end{definition}

\begin{theorem}
\label{Thm 3.2}Let the flux function $H$ in the conservation law (\ref{cl}) be
piecewise linear and convex, with $N$ segments and $N+1$ 
vertices. Let the initial condition $g^{\prime}_N\left(
s\right)$ be given by a discretized Gaussian stochastic process, as
outlined in Definition \ref{Def 1}. The probability that the minimum 
occurs on the $i$th segment, i.e.
\begin{equation}
p_{i}:=\mathbb{P}\left\{  Q\left(  y^{\ast}\left(  x,t\right)  \right)
=Q\left(  y\left(  i,x,t\right)  \right)  \right\}\text{, i.e., }
w(x,t)=c_{N+1-i}  \label{Thm3}%
\end{equation}
is given by%
\begin{align}
p_{i} &  = \sum_{j=2}^{n_{i}}
\left(  2\pi\right)  ^{-\frac{n}{2}}\left\vert A\right\vert
^{\frac{1}{2}}\int_{-\infty}^{\infty}dx_{i,j}
{\displaystyle\prod\limits_{\left(m,l\right)\neq \left(i,j\right)}}\int_{x_{i,j}}^{\infty} dx_{m,l}
e^{-\frac{1}{2}\left(
\bar{x}-\tilde{\mu}\right)  ^{T}A\left(  \bar{x}-\tilde{\mu}\right)
}\nonumber\\
&for \ 1\le i \le N.\label{Thm4}%
\end{align}

The probability of such a minimum occuring at a vertex point 
of $L$ is given by an expression 
similar to (\ref{Thm4}) with the integral over the jth vertex as the outermost 
integral (treating the vertex as a segment with only one point).

Finally, define the event that the minimum occurs at the $jth$ vertex of 
$L$ by $R_{j}$ and set (for notational convenience) $p_{N+i}=\mathbb{P}
\left\{R_{i}\right\}$. Then the expected value of the solution is given by
\begin{align}
\mathbb{E}\left\{  w\left(  x,t\right)  \right\}  = \mathbb{E}
\left\{ L^{\prime}\left(
\frac{x-y^{\ast}\left(  x,t\right)  }{t}\right) \right\} \nonumber\\  
+ \sum_{i=1}^{N+1} \mathbb{P}\left\{ R_{i}\right\}
\mathbb{E}\left\{g'(y^{*}(x,t))\right\} \nonumber\\
=-\sum_{i=1}^{N}p_{i}c_{N+1-i}
+ \sum_{i=N+1}^{2N+1}p_{i}\mathbb{E}\left\{g'(y^{*}(x,t))\right\}\label{Thm5}%
\end{align}
a.e., that is, an average of the slopes of $L$ weighted by the probability of
the minimum occurring on the $i$th segment of $L$, plus terms at vertices of $L$.
\end{theorem}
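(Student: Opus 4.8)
The plan is to reduce the statement to a computation involving the joint Gaussian density of the sampled values, and then to establish the formula for $p_i$ by expressing the relevant event $\{Q(y^*)=Q_i\}$ as a region in $\mathbb{R}^n$ over which we integrate that density. The overall structure has three pieces: (i) identify the probabilistic event governing "the minimum lands on the $i$th segment of $L$"; (ii) write down the corresponding Gaussian integral, paying attention to which variable is "free" (integrated over all of $\mathbb{R}$) and which are constrained to lie above it; (iii) assemble $\mathbb{E}\{w(x,t)\}$ by conditioning on the mutually exclusive and exhaustive events (minimum on segment $i$, or at vertex $j$) and invoking Theorem~\ref{Thm 3.1} to evaluate $w$ on each event.

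First I would fix $x$ and $t$ and invoke Definition~\ref{Def 1} to replace the continuum minimization in (\ref{ystardef}) by a minimization over the finite point set $\{r_{i,j}\}$ together with the $N+1$ vertices, using Lemma~\ref{Prop 2.2}'s structure (the Legendre transform $L$ is finite only on $[x-m_{N+1}t,\,x-m_1 t]$). Write $\Phi(y) := tL\!\left(\tfrac{x-y}{t}\right)+g(y)$; the values $\{\Phi(r_{m,l})\}$ are jointly Gaussian, because $g$ restricted to the sample points is a discretized Gaussian stochastic process and $tL(\tfrac{x-\cdot}{t})$ is a deterministic shift, hence the vector $\bar x$ (using the paper's notation for the realized values) has mean $\tilde\mu$ with $\tilde\mu_{i,j}=\mathbb{E}\{g(r_{i,j})\}+tL(\tfrac{x-r_{i,j}}{t})$ and inverse covariance $A$. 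The event that the \emph{greatest} minimizer lies on the interior of the $i$th segment is, up to a measure-zero set of ties, the event $\bigcup_{j=2}^{n_i}\{\Phi(r_{i,j})\le \Phi(r_{m,l})\ \text{for all }(m,l)\neq(i,j)\}$; these are disjoint (again up to ties), which is why $p_i$ appears as a sum over $j$. For each fixed $j$, integrating the joint density with $x_{i,j}$ ranging over $\mathbb{R}$ and every other $x_{m,l}$ ranging over $[x_{i,j},\infty)$ produces exactly one summand of (\ref{Thm4}); the normalizing constant $(2\pi)^{-n/2}|A|^{1/2}$ is just the Gaussian density prefactor. The vertex probabilities $\mathbb{P}\{R_j\}$ are obtained the same way, now taking the integral over the single vertex coordinate $r_{j,1}$ as the outermost one, which is the meaning of "treating the vertex as a segment with only one point."

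For the expectation formula, I would partition the probability space (a.e.) into the $2N+1$ events $\{y^*\in \operatorname{int} I_i^{x,t}\}$ for $1\le i\le N$ and $\{y^* = r_{j,1}\}$ for $1\le j\le N+1$. On the first type of event, $y^*$ is a vertex of $g_N$ (piecewise constant) but not of $L$, so Theorem~\ref{Thm 3.1} gives $w(x,t)=L'\!\left(\tfrac{x-y^*}{t}\right)=-c_{N+1-i}$, the slope of $L$ on $I_i$. On the second type, $y^*$ is at a vertex of $L$, so Theorem~\ref{Thm 3.1} gives $w(x,t)=g'(y^*(x,t))$. Taking expectations and using the tower property,
\begin{align}
\mathbb{E}\{w(x,t)\}
&=\sum_{i=1}^{N}\mathbb{P}\{y^*\in\operatorname{int}I_i^{x,t}\}\cdot(-c_{N+1-i})
+\sum_{j=1}^{N+1}\mathbb{P}\{R_j\}\,\mathbb{E}\{g'(y^*(x,t))\mid R_j\}\nonumber\\
&=-\sum_{i=1}^{N}p_i\,c_{N+1-i}+\sum_{i=N+1}^{2N+1}p_i\,\mathbb{E}\{g'(y^*(x,t))\},\nonumber
\end{align}
which is (\ref{Thm5}) after absorbing the conditional expectation into the notation $\mathbb{E}\{g'(y^*(x,t))\}$ and relabeling $p_{N+j}=\mathbb{P}\{R_j\}$; the first line of (\ref{Thm5}) is just the un-expanded version of this.

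The main obstacle I expect is the careful handling of ties and of which minimizer is selected: (\ref{ystardef}) picks the \emph{greatest} minimizer $\arg^+\min$, so when the minimum value is attained simultaneously at an interior point and at a vertex of $L$, one must check that this happens only on a set of probability zero (so it does not affect $p_i$ or the expectation), except precisely at the jump locations of $w$ in $x$ — which is why the formula is asserted only "a.e." A second, more technical point is verifying that the covariance matrix $A$ of the full point set is invertible so that the density in (\ref{Thm4}) is well-defined; this requires that the chosen sample points be distinct and that the Gaussian process be nondegenerate on them, which is where the condition $\nu(\zeta_i)>\varepsilon>0$ in the partition and the structural hypotheses on $X(r)$ enter. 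Everything else — Fubini to justify the iterated integrals, the change of variables $x_{m,l}\mapsto x_{m,l}-x_{i,j}$ implicit in writing the inner limits as $\int_{x_{i,j}}^\infty$ — is routine.
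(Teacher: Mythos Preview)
Your proposal is correct and follows essentially the same approach as the paper: reduce the continuum minimization to a finite Gaussian vector via Definition~\ref{Def 1}, express $p_i$ as a sum over the disjoint events ``$\Phi(r_{i,j})$ is the overall minimum'' by integrating the joint Gaussian density over the region $\{x_{i,j}\in\mathbb{R},\ x_{m,l}\ge x_{i,j}\ \text{for }(m,l)\neq(i,j)\}$, and then obtain (\ref{Thm5}) by partitioning on the $2N+1$ events and invoking Theorem~\ref{Thm 3.1}. Your treatment of ties, the $\arg^+\min$ convention, and the invertibility of $A$ is in fact more careful than the paper's own argument, which leaves these points implicit.
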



\begin{remark}
In Theorem \ref{Thm 3.2}, we present closed-form probability 
of the solution taking on one of a number of values for a given 
$x$ and $t$, as well as the expectation of the solution at this 
given value. In fact, we can also write an expression for the 
distribution of the local minimum on segment $i$ as a function of its value 
$s$ by instead taking the last integral in (\ref{Thm4}) up to $s$
instead of $\infty$.

We may also write expressions for the cdf of the value of 
$g+L$ on the vertex, and for the cdf of the minimum value 
obtained on just one segment. Consider first, for
given $i\in\left[  1,N\right]  $, the domain $I_{i}^{x,t}$ (with parameters
$x,t$) where $L$ is a purely affine function. Then the cumulative probability
distribution (cdf) for the minimum on this $i$th segment, i.e. the quantity 
$Q_{i}$ is given by
\begin{equation}
F_{Q_{i}}\left(  s\right)  :=\left(  2\pi\right)  ^{-\frac{n_{i}}{2}%
}\left\vert A_{i}\right\vert ^{\frac{1}{2}}\int_{-\infty}^{s}...\int_{-\infty
}^{s}e^{-\frac{1}{2}\left(  \bar{x}-\bar{\mu}\right)  ^{T}A_{i}\left(  \bar
{x}-\bar{\mu}\right)  }dx_{i,n_{i}}...dx_{i,1}.\label{Thm2}%
\end{equation}
where $\bar{s}=\left(  s,s,...,s\right)  $ and $\bar{\mu}$ the vector given by
$\mathbb{E}\left\{  g\left(  \bar{y}\right)  +tL\left(  \frac{\bar{x}-\bar{y}%
}{t}\right)  \right\}  $ evaluated at the discrete set of points. 

Furthermore, the cdf of a potential 
minimum that occurs at the $jth$ of the $N+1$ vertices of $L$ is given by
\begin{equation}
F_{v_{j}}(s) = (2\pi)^{-\frac{1}{2}}\vert \mu_{j} \vert ^{-\frac{1}{2}}
\int_{-\infty}^{s}e^{-\frac{1}{2}\mu_{j}\left(x-\mu_{j}\right)^{2}
}dx.
\end{equation} 
\end{remark}


\begin{remark}
The case of the Gaussian stochastic process includes the deterministic case,
i.e. when $p_{i}=1$ for some $i$ dependent on $x$ and $t$ and $p_{j}=0$ for
$j\not =i$.
\end{remark}

\begin{proof}
[Proof of Theorem \ref{Thm 3.2}]We consider an initial condition given by the
discretized Gaussian stochastic process $g'_{\tilde{N}}(x)$. The key feature 
is that, using our methods, this problem of finding the minimum over the entire 
interval $(x-t,x+t)$ is reduced to finding a minimum 
over $n$ points, $n_{i}$ of which are on each segment, and $N+1$ points 
that are on the vertices of $L$. Given an $x$ and $t$, these $n$ points 
are fixed. Since we have a joint probability density 
for the values of this function at any of these points, it is a matter of 
integrating over the appropriate region(s) to obtain the desired probabilities. 
From there we can calculate not only the expectation value $\mathbb{E}
w(x,t)$ but the entire distribution of the solution $w$.
To this end, let $ \left\{  Y_{j}\right\}_{j=1}^{n}$ be a set of random 
variables with a multivariate probability density given by%
\begin{equation}
f\left(  x;\mu,A\right)  =\left(  2\pi\right)  ^{-\frac{n}{2}}\left\vert
A_{i}\right\vert ^{\frac{1}{2}}e^{-\frac{1}{2}\left(  \bar{x}-\bar{\mu
}\right)  ^{T}A_{i}\left(  \bar{x}-\bar{\mu}\right)  },
\end{equation}
where $A_{i}:=\sum_{i}^{-1}$ and $\sum_{i}$ is the covariance matrix of the
random variables in the subpartition $\zeta^{i}$, and $\bar{x},\bar{\mu}%
\in\mathbb{R}^{n_{i}}$ are vectors. Here the components of $\mu$ and $x$ are
related by $\mu_{i}=tL\left(  \frac{x_{i}-y_{i}}{t}\right)  +\mathbb{E}%
g\left(  y_{i}\right)  $. Thus, $f$ contains all the information we will need
to compute the minimum along the $i$th segment of $L$.

For simplicity, we write out the distribution over the points 
$n_{i,2}, ..., n_{i,n_{i}}$ to have values less than or equal to $s_{i}$. 
This is expressed as
\begin{align}
\mathbb{P}\left\{  Y_{2}\leq s_{1},...Y_{n_{i}}\leq s_{n_{i}}\right\}   &
=\mathbb{P}\left\{  Y_{i}\in%
{\displaystyle\bigotimes\limits_{i=1}^{n}}
(-\infty,s_{i}]\right\}  \nonumber\\
&  =\int_{-\infty}^{s_{1}}...\int_{-\infty}^{s_{n}}\frac{e^{-\frac{1}%
{2}\left(  \bar{x}-\tilde{\mu}\right)  ^{T}A_{i}\left(  \bar{x}-\tilde{\mu
}\right)  }}{\left(  2\pi\right)  ^{\frac{n_{i}-1}{2}}\left\vert \sum_{i}\right\vert
^{\frac{1}{2}}}dx_{n_{i}}...dx_{2}.\label{onesegprob}%
\end{align}
Note we have supressed the first index in $Y_{i,j}$ (which would be $i$) 
in these quantities over segment $i$ for ease of notation. To find the probability 
that a minimum on one segment is below a value $s$, we
can use the result (\ref{onesegprob}) and must all sum over all the (mutually 
exclusive) events that each random variable $Y_{i}$ is a minimum as follows
\begin{align}
\mathbb{P}\left\{  Y_{i}\in(-\infty,s)^{n_{i}}\right\} = 
\sum_{j=2}^{n_{i}} \left(  2\pi\right)
^{-\frac{n_{i}-1}{2}}\left\vert A_{i}\right\vert ^{\frac{1}{2}}\nonumber\\  
\left\{\int_{-\infty
}^{s}...\int_{-\infty}^{s}e^{-\frac{1}{2}\left(  \bar{x}-\tilde{\mu}\right)
^{T}A_{i}\left(  \bar{x}-\tilde{\mu}\right)  }dx_{n_{i}}...dx_{1}\right\}
,\label{exprmaxall}%
\end{align}
where we have $(n_{i}-1)-$fold integrals from $-\infty$ to $s$ on the right-hand
side of (\ref{exprmaxall}). A similar construction then yields the result (\ref{Thm4}).

We now want to use equation (\ref{Thm 3.1a}) from Theorem \ref{Thm 3.1},
which states the solution $w\left(  x,t\right)  $ takes the value of
$L^{\prime}$ at any point where the minimum is achieved at a vertex of $g$, 
and the value $g$ when achieved at a vertex of $L$. By accounting for both the
possibility of a minimum at one of the $N$ segments or at one of the vertices 
of $L$, we can write not only the expectation value of the solution at a given 
point, but the entire distribution. The expectation value is consequently
\begin{align}
\mathbb{E}\left\{  w\left(  x,t\right)  \right\}  = \mathbb{E}
\left\{ L^{\prime}\left(
\frac{x-y^{\ast}\left(  x,t\right)  }{t}\right) \right\} \nonumber\\  
+ \sum_{j=1}^{N+1} \mathbb{P}\left\{R_{j}\right\}
\mathbb{E}\left\{g'(y^{*}(x,t))\right\} \nonumber\\
=-\sum_{i=1}^{N}p_{i}c_{N+1-i}
+ \sum_{i=N+1}^{2N+1}p_{i}\mathbb{E}\left\{g'(y^{*}(x,t))\right\},
\end{align}
i.e., it is given by the average over these possibilities. The event of a minimum 
at a vertex of $L$ may be more than a set of measure zero at the discrete level, 
but one might expect convergence to zero as we take the limit 
$\tilde{N}\rightarrow \infty$. The situation in regards to these two cases
 is illustrated in Figure \ref{Fig MinL}.
\end{proof}
\begin{figure}
[h]
\begin{center}
\includegraphics[width=\linewidth]%
{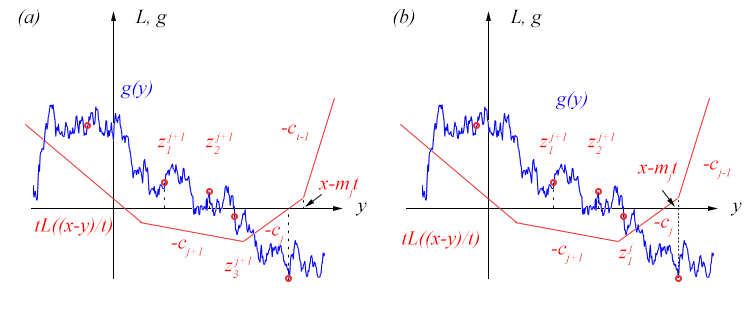}%
\caption{We are seek the minimum of $L\left(  \frac{x-y}{t}\right)  +g\left(
y\right)  $ amongst a discrete, finite set of points $\left\{  z_{i}%
^{j}\right\}  $ which are chosen based on a fixed $x$ and $t$. Here we display
integrated Brownian motion, but these arguments hold for \textit{any
}stochastic process: (a) In most cases, none of the points $\left\{  z_{i}%
^{j}\right\}  $ coincide with the vertices of the function $L$, so the minimum
on each segment is well-defined and one computes $\mathbb{E}\left\{
w\right\}  $ as described below; (b) Occasionally the points $\left\{
z_{i}^{j}\right\}  $ may intersect $\left\{  x-m_{k}t\right\}  $ for some
$i,j,k$ but this is a set of measure zero and can be ignored.}%
\label{Fig MinL}%
\end{center}
\end{figure}
For a specific
realization of the stochastic process $g_{\tilde{N}}\left(  x\right)  $, the values of the
solution $w$ are within the range $\left\{  c_{i}\right\}  $. The solution $w$
takes the value $c_{N+1-i}$ when the minimum is achieved along the $i$th
segment. If the minimum is achieved at a vertex of $L$, the expectation of the 
solution is zero by assumption, so the second set of terms drops out.

\begin{remark}
As an aside, we note that one can transform the integrals to a different
coordinate system and simplify the exponentials to only square terms. However,
the domain of integration is transformed in such a way that the integration is
nontrivial and results in a nested sequence of integrated error functions. By
construction, the matrix $A$ is symmetric and real, so the eigenvalues are
real there exists a matrix $U_{i}$ such that $U^{T}AU=D$, where $D$ is the
diagonal matrix consisting of the eigenvalues $\lambda_{n_{i}}$. Any repeated
eigenvalues are assigned to $\left\{  \lambda_{i}\right\}  $ as needed, i.e.
$\lambda_{k}=\lambda_{k+1}$ is permitted. Furthermore, we know $U^{-1}=U^{T}$,
so we can write%
\begin{equation}
\left(  \bar{x}-\bar{u}\right)  ^{T}A\left(  \bar{x}-\bar{\mu}\right)
=\left(  \bar{x}-\bar{\mu}\right)  UDU^{-1}\left(  \bar{x}-\bar{\mu}\right)
=z^{T}Dz=\sum_{i=1}^{n}\lambda_{i}z_{i}^{2}%
\end{equation}
where%
\begin{equation}
\bar{z}=U^{-1}\left(  \bar{x}-\bar{\mu}\right)
\end{equation}
for a unitary matrix $U$ so that $\left\vert U\right\vert ^{2}=1$. Hence,
given any vector $\mu$ and a real, symmetric matrix $A$, we can write%
\begin{equation}
-\frac{1}{2}\left(  \bar{x}-\bar{\mu}\right)  =-\frac{1}{2}\sum_{i=1}^{n_{i}%
}\lambda_{i}z_{i}^{2}.
\end{equation}
Thus, one has%
\begin{equation}
f\left(  z_{j},\left\{  \lambda_{j}\right\}  \right)  =\left(  2\pi\right)
^{-n/2}\left\vert \Sigma\right\vert ^{-\frac{1}{2}}e^{-\frac{1}{2}\sum
_{i=1}^{n_{i}}\lambda_{i}z_{i}^{2}}%
\end{equation}
or equivalently%
\begin{equation}
\mathbb{P}\left\{  Z_{1}\leq z_{1},...,Z_{n_{i}}\leq z_{n_{i}}\right\}
=\int...\int_{\Omega}\frac{e^{-\frac{1}{2}\sum_{i=1}^{n}\lambda_{i}\tilde
{z}_{i}^{2}}}{\left(  2\pi\right)  ^{\frac{n}{2}}\left\vert \sum\right\vert
^{\frac{1}{2}}}d\tilde{z}_{n_{i}}...d\tilde{z}_{1}\label{coordChangeOneSeg}%
\end{equation}
over the transformed domain $\Omega$ (rotated and scaled from the original
region $\left[  -\infty,s\right]  ^{n_{i}}$).
\end{remark}

\section{Exact and approximate results for the density of shocks}

We now consider the density of shocks in the solution $w\left(
x,t\right)  $, i.e. a spatial coordinate $x$ where
\begin{equation}
w\left(  x_{-},t\right)  \not =w\left(  x_{+},t\right)  .
\end{equation}
This was discussed in the special case $H\left(  p\right)  =\left\vert
p\right\vert $ in Section 3. Recall from Section 4 the notation $I_{i}^{x,t}
=(r_{i,1},r_{i+1,1})$
to track the intervals along which $L$ has slope $d_{i}$. 
From the arguments above, it is clear that at point $x$ 
where a shock occurs, the minimizer $y^{\ast}\left(  x,t\right)$ jumps 
from one segment of $L$ to another. By
continuity, we can assume that for sufficiently small $\varepsilon>0$, $y^{\ast}\left(
x-\varepsilon,t\right)  $ is located within the same segment in the limit. In other words, one writes%
\begin{equation}
\left\{y^{\ast}\left(
x-\varepsilon,t\right)\text{ }|\ \varepsilon<\varepsilon_{0} \right\}  \subset I_{i}^{x,t}%
\backslash\partial I_{i}^{x,t}
\end{equation}

Let $\Delta x>0$ and consider the following. Suppose that $y_{1}:=y^{\ast
}\left(  x-\Delta x,t\right)  $ is located where $L$ has slope $d_{1}$ and
$y_{2}:=y^{\ast}\left(  x,t\right)  $ where $L$ has slope $d_{2}$. When we
shift the spatial coordinate by amount $\Delta x$, the value of the quantity
to be minimized changes by $R_{1}=-d_{1}\Delta x$ at $y_{1}$ and by
$R_{2}=-d_{2}\Delta x$ at $y_{2}$. Thus, the net change is a decrease $\left(
d_{2}-d_{1}\right)  \Delta x$ at $y_{2}$ relative to $y_{1}$. This means a
shock from this limit can only occur when $d_{2}>d_{1}$. This argument is
illustrated in Figure \ref{Min2Segments}. If we let $\Delta x$ be negative,
i.e. we consider the points $x$ and $x+\Delta x$ instead of $x-\Delta x$ and
$x$, we can have a shock for the case $d_{1}>d_{2}$. We now want to find an
expression for the probability density of such shocks.

To be more precise, we let $a:=\Delta x\left(  d_{2}-d_{1}\right)  .$ We want
to compute the probability that the minimum shifts from one segment to
another. 
First consider the simpler problem with only two segments, and the
probability of the minimum shifting from a particular point on the first
segment to a particular point on the second segment. To illustrate this in a 
simple way, consider the points $r_{1,2}$ and $r_{2,2}$ among the $n_{1}+n_{2}+1$
points in the system. We also use the notation that the random variable 
$Y_{i,j}$ corresponds to the point $r_{i,j}$, supressing the subscript $r$.
 We compute the probability that the
variable $Y_{1,2}$ associated with the point $r_{1,2}$ is at least $s$,
$Y_{2,2}$ is in the range between $s$ and $s+a$, and $Y_{1,i}$, 
$Y_{2,i}$, and $Y_{3,1}$ are all at least $s+a$ for all $i\neq 2$ as follows
\begin{align}
&  \mathbb{P}\left\{  Y_{1,2}=s_{1,2},Y_{2,2}\in\left(  s_{1,2},s_{1,2}+a\right)
,Y_{j,i}\geq s_{1,2}+a\text{ for all }i\neq 2, 1\leq j \leq 3
\right\}  \nonumber\\
& = \int_{s_{1,2}}^{s_{1,2}+a}ds_{2,2}
\displaystyle\prod\limits_{\left(m,n\right)\neq\left(1,2\right)\ or \ \left(2,2\right)}
\int_{s_{1,2}+a}^{\infty}ds_{m,n}
f\left(s_{1,1},s_{1,2},...,s_{3,1}\right)
\end{align}
where $f$ is the joint distribution for the variables $s_{1,1},...,s_{3,1}$ 
at the $n_{1}+n_{2}+1$ points, and we have fixed $s_{1,2}$. Now, we want to
divide by $a$ and take the limit. When we differentiate with respect to $x$, the integral with respect to
$s_{2,2}$ (in the $(n_{1}+1)-$th argument of $f$) drops out due to the
Fundamental Theorem of Calculus, assuming sufficient continuity properties. The 
probability of transitioning from a minimum from one segment to the other can 
be written explictly as follows
\begin{align}
\mathbb{P}_{\left(1,2\right),\left(2,2\right)}:&=
\lim_{\Delta x \rightarrow0}\frac{1}{\Delta x}\mathbb{P}\left\{
\text{min at }\left(1,2\right)\text{ at }\left(x,t\right)\text{ and min at }
\left(2,2\right)\text{ at }\left(x+\Delta x,t\right)\right\}\nonumber\\
&=\left(  d_{2}-d_{1}\right)\int_{-\infty}^{\infty}ds_{1,2}
{\displaystyle\prod\limits_{(u,v) \neq  (1,2), (2,2)}}ds_{u,v}
\int_{s_{1,2}}^{\infty}
f\left(s_{1,1}, s_{1,2}, ..., s_{2,1},s_{2.2}, ..., s_{3,1},
\right).
\end{align}
i.e. the $s_{2,2}$ entry is evaluated at $s_{1,2}$ in $f$.
This can be generalized to computing the probability of transition from one
segment to another, which will entail $n_{i}n_{j}$ terms. These terms
correspond to a jump from a minimum at $s$ 
attained at one of the $n_{i}$ possible points from
segment $i$ to one of the $n_{j}$ possible points on segment
$j$. Now we extend this computation from one point to another, 
to all points in a segment $i$ to another segment $j$. Let
\begin{align}
\tilde{P}_{i,j}\left(  s; a\right)  :=\mathbb{P} \Big\{  & \min_{2\leq k\leq
n_{i},}Y_{i,k}=s,\text{ }\min_{2\leq k\leq n_{j}}
Y_{j,k} \in\left(s,s+a\right), \nonumber\\
& \min_{m\neq i,j \ or \ n=1}Y_{m,n}\geq s+a\Big\},\label{switchsegments}%
\end{align}
that is, the probability that the minimum on segment $i$ equals
 a given value $s$, that the minimum on a different segment
$j$ falls slightly above this same value $s$, and that the minimum on 
all of the other segments excluding $i$ and $j$ is above the value 
$s+a$. Then, as $x$ is changed
an infinitesimal amount, the value of the solution $w\left(  x,t\right)  $
will flip from $d_{1}$ to $d_{2}$. In the limit $a\downarrow0$ (or 
equivalently, $\Delta x \downarrow0$), the density of
shocks is obtained. One may write an exact expressiom and take 
this limit to obtain

\begin{figure}
[htp]
\begin{center}
\includegraphics[width=\linewidth]%
{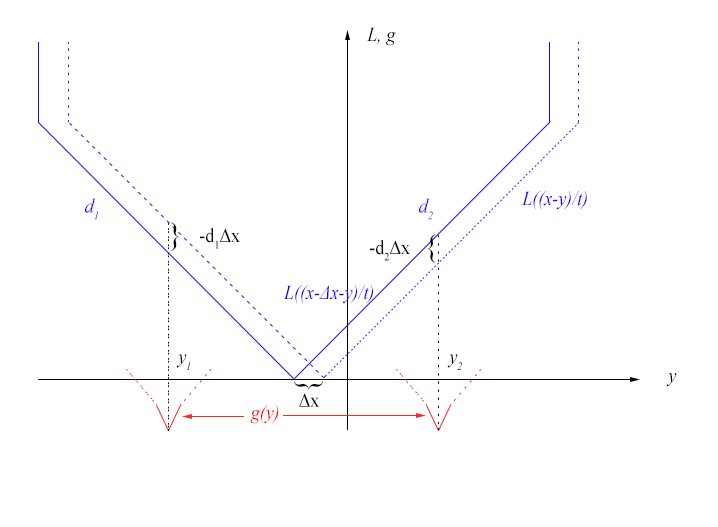}%
\caption{We are interested in conditions under which the global minimizer
$y^{\ast}\left(  x,t\right)  $ changes from one segment to another, as this is
the only case for which the solution profile $w\left(  x,t\right)  =L^{\prime
}\left(  y^{\ast}\left(  x,t\right)  \right)  $ will change, resulting in a
shock. Consider for ease of notation the case for which $L$ has only two
slopes and take an initial condition $g$ that has local minima at $y_{1}$ and
$y_{2}$ and rapidly increases elsewhere. We sketch $L\left(  \frac{x-\Delta
x-y}{t}\right)  $ and $L\left(  \frac{x-y}{t}\right)  $, and want the minimum
to change in the limit $\Delta x\rightarrow0$. For simplicity, in this
illustration we consider the case $N=2$, i.e. where $L$ has only two slopes.
In order for the minimum to switch from $y_{1}$ to $y_{2}$, the relative
change of the argument at $y_{2}$ compared to $y_{1}$, given by $\left(
d_{2}-d_{1}\right)  \Delta x$, must be negative.}%
\label{Min2Segments}%
\end{center}
\end{figure}

\begin{align}
\mathbb{P}_{i,j}:&= \lim_{\Delta x \rightarrow 0}\frac{1}{\Delta x}
\mathbb{P}\left\{w\left(x,t\right)=d_{i}\text{ and }w\left(x+\Delta x,t\right)=d_{j}\right\} \nonumber\\
&=\sum_{k=1}^{n_{i}}\sum_{k^{\prime}=1}
^{n_{j}}\left(  d_{k^{\prime}}-d_{k}\right)\int_{-\infty}^{\infty}ds
{\displaystyle\prod\limits_{(m,n) \neq  (i,k), (j,k')}}\nonumber\\
& \int_{s}^{\infty}ds_{m,n}f\left(s_{1,1}, ..., s_{N+1,1}
\right)|_{s_{i,k}=s_{j,k'}=s}.
\label{switchsegmentsall}%
\end{align}
for the density of any transitions of a minimum between segments $i$ and
$j$. The joint probability distribution function $f$ is as outlined in (\ref{Thm2}).


One can diagonalize this matrix. Indeed, denoting the eigenvalues of
$A$ by $\left\{  \lambda_{m}\right\}  _{m=1}^{n}$, 
$A$ can thus be diagonalized by $U^{T}A^
U=D$, where the diagonal matrix $D$ is given by
$D _{jk}=\delta_{jk}\lambda_{j}$. We may then
rewrite the integrand $f\left(  \cdot\right)  $ in (\ref{switchsegmentsall}) as $\prod_{l=1}^{n-2}%
e^{-\lambda_{l}z_{l}^{2}}$ and have, similar to the proof of Theorem
\ref{Thm 3.2}:%
\begin{equation}
P_{i,j}\left(  s\right)  =\left(  2\pi\right)  ^{-\frac
{n}{2}}\sum_{k=1}^{n_{i}}\sum_{k^{\prime}=1}^{n_{j}
}\left(  d_{k^{\prime}}-d_{k}\right)  \left\vert A\right\vert
^{\frac{1}{2}}\int_{\Omega_{(i,k),(j,k')}}%
{\displaystyle\prod\limits_{l=1}^{n-2}}
e^{-y_{l}^{2}}dy_{l}\label{transitionDensityas}%
\end{equation}
where the transformed domains of integration depends upon $k$ and 
$k^{\prime}$. The density of transitions, in a range $\left(  s,s+ds\right)
$ can be expressed using these results. The integration over the domain $\Omega$ 
may appear complicated due to the high dimension of the space and the 
nested nature of the integrals, but one can make some simplifications. 
For example, as is further illustrated in the next section, the eigenvalue spectrum 
of $A$ falls off rapidly. Thus, only a (smallest) few of the eigenvalues are relevant and many 
of the integrals can be well-approximated as $\delta-$functions about 
$y_{l}=0$.

\subsection{Transitions between vertex and non-vertex points}

Above, we have considered the question of transition from an overall 
minimum from a segment $i$ to a different segment $j$. There is 
also the issue of a minimum obtained at one of the $N$ segments 
of $L$ replaced by a minimum at a vertex, and vice versa. We will 
quantify the probability of such an occurence.

First, we comment that without discretization, we would 
have a transition from $w(x,t)=d_{i}$ to $w(x,t)=g'(j,1)$ if $Y_{i,k}=s$ 
and $Y_{j,1}\in(s,s-\Delta x (d_{i}+g'(j,1))$. Now, with the discrete model, 
we will evaluate exactly the probability that if $Y_{i,k}=s$, then $Y_{j,1}
\in(s,s-\Delta x (d_{i}+g'(j,1)))$ and vice versa. We assume for the 
first case that $d_{i}+g'\left(j,1\right)<0$.

Recall that the grid we place is for a fixed $x$ and $t$. In this section, 
we will consider changing $x$ (or similarly, $t$) by such a small amount 
$\Delta x$ that all of the original grid points within $(x-t,x+t)$ remain, and no new 
grid points are introduced into the interval $(x+\Delta x -t, x+\Delta x +t)$. 
Thus, the points $r_{k,l}$ remain unchanged. We also assume that the minimum 
attained at $(x,t)$ occurs at the vertex $r_{j,1}$ and make the approximation 
that $L$ still nearly has a vertex at $r_{j,1}$ despite the $\Delta x$ shift.

The first step is to identify the condition for local minima at vertices $r_{j,1}$ 
to overtake $Y_{i,k}$, the absolute minimum over $\mathbb{R}$ located at 
some $k$th point on the $i$th segment. We use $Y_{i,k}^{new}$ to denote 
the new local minimum when $L$ is shifted by $\Delta x$. We calculate 
\begin{align}
Y_{i,k}^{new}:=Y_{i,k}-\Delta xd_{i}>Y_{j,1}+g'(r_{j,1})\Delta x =: Y_{j,1}^{new}
\label{gToL}
\end{align}

In other words, (\ref{gToL}) is the requirement for a transition from a minimum 
at the vertex of $g$ at $r_{i,k}$ to the vertex of $L$ originally at $r_{j,1}$ that 
has now shifted. Of course, we also need the condition
\begin{align}
Y_{i,k}<Y_{j,1}\label{cond1}
\end{align}

The condition (\ref{gToL}) is equivalent to
\begin{align}\label{cond2}
Y_{i,k}-\Delta x(d_{i}+g'(r_{j,1}))>Y_{j,1}
\end{align}
Once we have established conditions (\ref{cond1}$-$\ref{cond2}), we can now calculate 
that probability using the discrete setup in which we deal with only the points 
${r_{i,1}, r_{i,2}, ...}$, i.e. perform the calculation using Gaussian integrals 
as follows. Let $\mathbb{P}_{i,j}^{\Delta x}$ be defined as the probability of a 
transition from segment $i$ of $L$ to the vertex point $r_{j,1}$ of $L$, so that

\begin{align}
\mathbb{P}_{i,j}^{\Delta x}  &:=
\sum_{k=1}^{n_{i}}\int_{-\infty}^{\infty}ds\mathbb{P}\left\{
Y_{i,k}=s, \ Y_{j,1}\in\left(s,s-\left(d_{i}+g'\left(r_{j,1}\right)\right)\Delta x\right)\right\} \nonumber\\
&=\sum_{k=1}^{n_{i}}\int_{-\infty}^{\infty}ds\int_{s}^{s-\left(d_{i}
+g'\left(r_{j,1}\right)\right)\Delta x}dx_{j,1}\nonumber\\
&{\displaystyle\prod\limits_{\left(m,n\right)\neq\left(i,k\right),\left(j,1\right)}}
\int_{s}^{\infty}dx_{m,n}
f\left(x_{1,1},...x_{N+1,1}\right)\vert_{x_{i,k}=s}\nonumber\\
\mathbb{P}_{i,j}  &:= \lim_{\Delta x \rightarrow0} \frac{\mathbb{P}_{i,j, \Delta x}}{\Delta x}
=-\left(d_{i}+g'\left(r_{j,1}\right)\right)\sum_{k=1}^{n_{i}}
\int_{-\infty}^{\infty}ds\nonumber\\
&{\displaystyle\prod\limits_{\left(m,n\right)\neq
\left(i,k\right),\left(j,1\right)}}\int_{s}^{\infty}dx_{m,n}
f\left(...\right)\vert_{x_{i,k}=s,x_{j,1}=s}\nonumber\\
&=\mathbb{P}\left\{w\left(x_{-},t\right)=d_{i}\text{ and }w\left(x_{+},t\right)
=g'\left(r\left(j,1\right)\right)\right\}.
\end{align}

The calculation for the opposite transition, which can only occur when $d_{i}+
g'\left(r\left(j,1\right)\right)$ is positive, is
\begin{align}
\mathbb{P}_{i,j}:&=\mathbb{P}\left\{w\left(x_{-},t\right)=g'\left(r\left(j,1\right)\right)
\text{ and }w\left(x_{+},t)=d_{i}\right)\right\}\nonumber\\
&=\lim_{\Delta x\rightarrow0}\frac{1}{\Delta x}\sum_{k=1}^{n_{i}}\int_{-\infty}^{\infty}
ds\mathbb{P}\left\{Y_{j,1}=s,Y_{i,k}\in\left(s,s+\left(d_{i}+g'\left(j,1\right)\right)\Delta x\right)
\right\}\nonumber\\
&=\left(d_{i}+g'\left(r_{j,1}\right)\right)\int_{-\infty}^{\infty}ds
{\displaystyle\prod\limits_{\left(m,n\right)\neq\left(i,k\right),\left(j,1\right)}}
\int_{s}^{\infty}dx_{m,n}f\left(...\right)\vert_{x_{i,k}=x_{j,1}=s}.
\end{align}

\section{Applications to selected examples and open problems}

The results of the preceding sections apply in broad generality for any kind
of randomness that falls under the umbrella of Gaussian stochastic processes.
In addition to the general results proven for an arbitrary Gaussian stochastic
process, it is also interesting to analyze the behavior of the solution to the
conservation law under a few specific cases. In particular, we will illustrate
the results for the cases of Brownian motion and Brownian bridge. As these
processes correspond to the initial condition $g^{\prime}$, we must also note
that $g\left(  x\right)  $ will be a Gaussian stochastic process for both of
these cases, termed integrated Brownian motion and integrated Brownian bridge,
respectively. For each of these cases, the covariance matrix takes a special
form as was described in Section 2.

\subsection{Brownian motion and Brownian bridge as initial conditions}

In cases such as Brownian motion, Brownian bridge, or Ornstein-Uhlenbeck, 
one observes specific structure in the matrices and integrals entailed in 
potential applications of Theorem \ref{Thm 3.2}, which may make it somewhat 
more tractable to numeric computation than an arbitrary Gaussian stochastic 
process. For example, in some cases the relevant inverses of covariance
matrices can be neglected aside from a narrow band of elements close to the
diagonal. For example, consider an integrated Brownian motion $S\left(
t\right)  $ and let $\left\{  t_{i}\right\}  _{i=1}^{n}$ be given. Observe
that the covariance matrix $\sum$ for integrated Brownian motion from
(\ref{covMatrixIBM}) can be written as \cite{RO}%
\begin{equation}
\sum\nolimits_{ij}=\frac{1}{N^{3}}\min\left(  i,j\right)  ^{2}\left(
\frac{\max\left(  i,j\right)  }{2}-\frac{\min\left(  i,j\right)  }{6}\right),
\end{equation}
so it is symmetric with respect to permuting $i$ and $j$.
One of the main quantities of interest in our results is of course the inverse
$A$ of this matrix. Upon taking $N$ large, one observes an interesting
property in that a substantial fraction of the diagonal elements of the matrix
$A$ are concentrated at one value. Numerical computation indicates that this
number is close to $14.354...$ independent of $N$. Furthermore, the smallest
of the eigenvalues start off at one value, with many concentrated near it, and
then increase sharply, as illustrated in Figure \ref{Fig EV}. In addition, the 
numerically significant parts of the columns of the matrix $A$ are repetitive.

We denote the smallest eigenvalue by $\lambda_{1}$. Due to the nature of the
integrands taking the form%
\begin{equation}
\int e^{-\lambda_{n}z_{n}^{2}}dz_{n},
\end{equation}
there will be a rapid falloff as $\lambda_{n}$ increases. As a first-order
approximation, one may assume the first $N^{\prime}$ eigenvalues have the
value $\lambda_{1}$ exactly, and ignore the rest as they will not make a
meaningful contribution to the solution. From the minimization perspective, we
are simply neglecting the terms corresponding to points at which the
probability of attaining the minimum is very small. This will lead to a
simplification of the form where one can set $\lambda_{i}=\lambda_{1}$ for 
a number $n'$ of the eigenvalues, and ignore the subsequent ones.

For the shock densities, one observes a similar phenomenon, and expects a
transition probability of the form%
\begin{align}
P_{i,i^{\prime}} &  \tilde{=}\left(  2\pi\right)  ^{-\frac{n}
{2}}\int_{-\infty}^{\infty}\sum_{k=1}^{n_{i}}\sum_{k^{\prime}=1}%
^{n_{i^{\prime}}}\left(  d_{k^{\prime}}-d_{k}\right)  \int_{-\infty}^{s
}...\int_{-\infty}^{s}e^{-\frac{1}{2}\left(  \bar{s}-\tilde{\mu}\right)
^{T}A_{i}\left(  \bar{s}-\tilde{\mu}\right)  }\nonumber\\
&  \cdot e^{-\left(  \frac{\lambda_{l}}{2}\left[  U^{T}\left(  \bar{s}%
-\bar{\mu}\right)  \right]  _{k}\right)  ^{2}-\left(  \frac{\lambda_{l}}%
{2}\left[  U^{T}\left(  \bar{s}-\bar{\mu}\right)  \right]  _{k^{\prime}%
}\right)  ^{2}}ds_{N+1,1}...ds_{1,1}ds.
\end{align}
where $\tilde{\mu}$ and $A$ now correspond to the mean and inverse covariance
matrix on segments $i$ and $i^{\prime}$ together.

In Section 2, the covariance matrix for Brownian bridge in the range where it
is defined is very similar to that of Brownian motion, despite some key
differences in the processes. Outside of $\left[  0,T\right]  $ 
(under linear transformation), the
variance of the integrated Brownian motion is constant. In applying this
initial condition, one must set $g^{\prime}\left(  x\right)  =g\left(
x\right)  =0$ outside of a finite range $\left[  0,T\right]$. In fact, one 
observes the same qualitative behavior
for the eigenvalues of its inverse covariance matrix. This will lead to a
result similar to (\ref{Thm4}) for the solution to the conservation law with
Brownian bridge initial data. Another interesting application is that of a
stationary Ornstein-Uhlenbeck process, i.e. with a covariance matrix invariant
under translation, which may facilitate computations for further results. The
advantage of this process is that it has constant variance, making it
advantageous for some physical models where the randomness is fairly
homogenous, rather than Brownian motion where the process is fixed at $0$ and
variance grows with time.

\begin{figure}
[htp]
\begin{center}
\includegraphics[width=\linewidth]%
{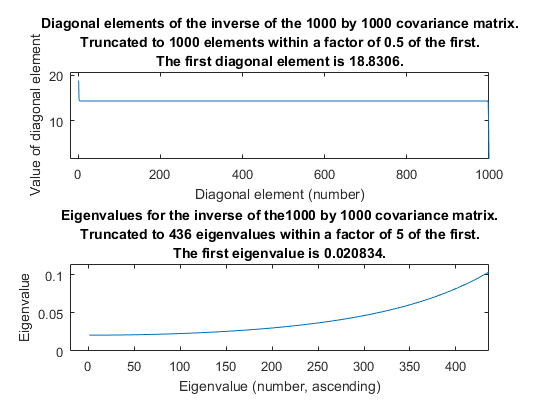}%
\caption{When $N$ is large, the diagonal elements of the inverse covariance
matrix $A$ are concentrated very close to a particular value which can be
calculated as approximately 14.3... The eigenvalues are also concentrated near
one value, and sharply increase. Larger eigenvalues can be neglected due to
the asymptotic behavior of the error function.}%
\label{Fig EV}%
\end{center}
\end{figure}

\subsection{Approximation to smooth flux}

A classical result by Dafermos \cite{D1} utilizing shock techniques describes
how solutions to conservation laws with smooth flux can be approximated by
those of polygonal flux. Our theorems can be combined with Dafermos' work in
order to use polygonal flux as a building block for an arbitrary smooth flux
function with randomness of the form of any Gaussian stochastic process.

One may also take the limits of $N\rightarrow\infty$ and make the partitions
$\left\{  r_{i,j}\right\}  $ increasingly fine in our solution
(\ref{Thm4}). In this case, one should obtain the result in \cite{FM}
for which Burgers' equation is considered with Brownian initial conditions and
exact solutions are computed. Our approach thus provides an extension of
exact, closed-form results without relying on the exact properties of a
particular flux function.

\section{Appendix: results for Gaussian stochastic processes}

Here we present sketches of the proof of several of the results we cited
earlier in Section 2.

\begin{proof}
[Proof of Proposition \ref{Prop 2.3}](b) Define $\left\{  t_{i}\right\}
_{i=1}^{n}$ as before, we need to show that we can write%
\begin{align}
S\left(  t_{1}\right)   &  =a_{11}R_{1}+...+a_{1m}R_{m}+\mu_{1}\nonumber\\
&  ...\nonumber\\
S\left(  t_{n}\right)   &  =a_{n1}R_{1}+...+a_{nn}R_{m}+\mu_{n}%
\end{align}
We can write the integral as an approximate sum with interpolation points
$\left(  r_{1},...,r_{n}\right)  $ given as $r_{i}:=\frac{j}{Nt_{n}}$, so that%
\begin{equation}
S\left(  t\right)  \tilde{=}S_{N}\left(  t\right)  :=\sum_{i=1}^{N_{t}}%
\frac{1}{N}W\left(  \frac{j}{N_{t_{n}}}\right)  \text{, }\frac{N_{t}}{N_{t_{n}}}\leq
t<\frac{N_{t}+1}{Nt_{n}}%
\end{equation}
By definition, then%
\begin{align}
NS_{N}\left(  t_{1}\right)   &  =W\left(  \frac{1}{N}\right)  +W\left(
\frac{2}{N}\right)  +...+W\left(  \frac{N_{t_{1}}}{N}\right) \nonumber\\
&  ...\nonumber\\
NS_{N}\left(  t_{n}\right)   &  =W\left(  \frac{1}{N}\right)  +W\left(
\frac{2}{N}\right)  +...+W\left(  \frac{N_{t_{n}}}{N}\right)  ,
\label{nsumibm}%
\end{align}
i.e. the only difference in the identities in (\ref{nsumibm}) is where one
terminates the sum. Since $W\left(  t\right)  $ is Gaussian stochastic by the
part (a), we may write%
\begin{equation}
W\left(  \frac{i}{N}\right)  =b_{1i}R_{1}+...+b_{1m}R_{m}+\tilde{\mu}_{i}
\label{bmmultivariate}%
\end{equation}
By substituting (\ref{bmmultivariate}) into (\ref{nsumibm}) and defining new
constants%
\[
a_{i1}:=\frac{\left(  b_{i1}+b_{i2}+...+b_{i,Nt_{1}}\right)  }{N},
\]
we obtain the result that integrated Brownian motion is also a Gaussian
stochastic process by using the Central Limit Theorem for functions
(\cite{PK}, p. 399).

We do not prove (c) and (d) here as the ideas are similar. In particular,
since a Brownian bridge is a linear combination of $W\left(  t\right)  $ and
$W\left(  T\right)  $, it can easily be expressed in terms of independent,
identically distributed normal random variables.
\end{proof}

\begin{proof}
[Proof of Theorem \ref{Thm 2.1}](a) We first consider the covariance matrix
for times $s$ and $t$. Since piecewise drift does not alter the covariance
matrix, we assume without loss of generality that this term is zero. Letting
$W\left(  t\right)  $ be a Brownian motion, one observes%
\begin{align}
Cov\left[  \int_{0}^{s}W\left(  r\right)  dr,\int_{0}^{t}W\left(  r\right)
dr\right]   &  =\mathbb{E}\left\{  \int_{0}^{t}W\left(  r\right)  dr\int
_{0}^{t}W\left(  r^{\prime}\right)  dr^{\prime}\right\}  \label{covIBM}\\
&  =\int_{0}^{s}\int_{0}^{t}\mathbb{E}\left\{  W\left(  r\right)  W\left(
r^{\prime}\right)  \right\}  dr^{\prime}dr\nonumber\\
&  =\int_{0}^{s}\int_{0}^{t}\min\left\{  r,r^{\prime}\right\}  drdr^{\prime
}\nonumber\\
&  =s^{2}\left(  \frac{t}{2}-\frac{s}{6}\right)  .
\end{align}
The cross term, $\mathbb{E}\left\{  \int_{0}^{t}W\left(  r\right)  dr\right\}
\mathbb{E}\left\{  \int_{0}^{t}W\left(  r^{\prime}\right)  dr^{\prime
}\right\}  $, vanishes since $\mathbb{E}\left\{  W\left(  t\right)  \right\}
=0$ for Brownian motion.

Using a discretization argument as in \cite{RO}, one may also construct
(\ref{covIBM}) from first principles.

(b) Let $Z\left(  t\right)  $ be an integrated Brownian bridge constructed as
in Definition \ref{Def 2.3}. As in part (a) we consider the sum approximation
to the Brownian bridge assuming no drift, and consider first only two points
$0\leq s\leq t\leq T$. We make note of (\ref{covMatrixIBM}) and also note%
\begin{equation}
Cov\left[  W\left(  T\right)  ,\int_{0}^{t}W\left(  r\right)  dr\right]
=Cov\left[  W\left(  t\right)  ,\int_{0}^{t}W\left(  r\right)  dr\right]
\label{covW1}%
\end{equation}
since $W\left(  T\right)  -W\left(  t\right)  $ is independent of events at
$t$ and before. Continuing from (\ref{covW1}), we have%
\begin{align}
Cov\left[  W\left(  T\right)  ,\int_{0}^{t}W\left(  r\right)  dr\right]   &
=\mathbb{E}\left\{  W\left(  t\right)  \int_{0}^{t}W\left(  r\right)
dr\right\} \nonumber\\
&  =\int_{0}^{t}\mathbb{E}\left\{  W\left(  t\right)  \right\}  W\left(
r\right)  dr\nonumber\\
&  =\int_{0}^{t}\mathbb{E}\left\{  \left(  W\left(  t\right)  -W\left(
r\right)  +W\left(  r\right)  \right)  W\left(  r\right)  \right\}
dr\nonumber\\
&  =\int_{0}^{t}\mathbb{E}\left\{  W\left(  r\right)  ^{2}\right\}
dr=\frac{t^{2}}{2}, \label{covMatrixW1Wr}%
\end{align}
noting that the second term in the definition of covariance vanishes since
$\mathbb{E}\left\{  W\left(  t\right)  \right\}  =0$. One then writes,
expanding the definition of $Z\left(  t\right)  $ and collecting terms from
(\ref{covW1}) and (\ref{covMatrixW1Wr}):%
\begin{equation}
Cov\left[  Z\left(  s\right)  ,Z\left(  t\right)  \right]  =s^{2}\left(
\frac{t}{2}-\frac{s}{6}\right)  -\frac{1}{T}\frac{t^{2}s^{2}}{4}.
\label{covMatrixIBMdiscrete}%
\end{equation}
for times $0\leq\left\{  t_{i}\right\}  _{i=1}^{N}\leq T$.

Now consider $0\leq s\leq T<t.$\ Then we have%
\[
Cov\left[  Z\left(  s\right)  ,Z\left(  t\right)  \right]  =Cov\left[
S\left(  s\right)  -\frac{s}{T}W\left(  T\right)  ,S\left(  T\right)
-W\left(  T\right)  \right]
\]

One obtains similar results by (stochastically) reflecting the process to
consider it on the interval $\left[  -T,T\right]  $ for symmetry. One can also
obtain these results by discretizing the process and writing the approximation
of the integral, then limiting to the continuum using the Central Limit
Theorem for functions. We also note that although the process $Z\left(
s\right)  $ is constant outside the interval $\left[  0,T\right]  $ (or
$\left[  -T,T\right]  $ depending on the definition), $Z\left(  s\right)  $
does not vanish for $s>T$ with probability $1$.
\end{proof}

\section*{Acknowledgments}

The author thanks the referee for review of the manuscript.

\section*{Conflict of interest}
The author declares no conflicts of interest in this paper.

\end{document}